\newcommand{\COLORON}{1}
\newcommand{\NOTESON}{0}
\newcommand{\Debug}{0}
\newcommand{\tcon}{3-connected}
\newcommand{\sepe}{hinge}
\newcommand{\pr}{consistent}
\newcommand{\prem}{\pr\ embedding}
\newcommand{\vapf}{VAP-free}
\newcommand{\ccfg}{\ensuremath{\cc_f(\G)}} 
\newcommand{\idg}{\ensuremath{\mathbb 1}}
\newcommand{\red}{simple}
\newcommand{\auto}{colour-automorphism}
\newcommand{\dray}{double ray}
\newcommand{\comment}[1]{}
\newcommand{\COMMENT}[1]{}
\definecolor{darkgray}{rgb}{0.3,0.3,0.3}
\newcommand{\defi}[1]{{\color{darkgray}\emph{#1}}}
\newcommand{\acknowledgement}{\section*{Acknowledgement}}
\newtheorem{proposition}{Proposition}[section]
\newtheorem{theorem}[proposition]{Theorem}
\newtheorem{corollary}[proposition]{Corollary}
\newtheorem{lemma}[proposition]{Lemma}
\newtheorem{observation}[proposition]{Observation}
\newtheorem{examp}[proposition]{Example}%[section]
\newcommand{\FIG}{0}
\newcommand{\note}[1]{  

	{\color{blue} \hspace*{-60pt} NOTE: \color{Turquoise}{\small  \tt \begin{minipage}[c]{1.1\textwidth}  #1 \end{minipage} \ignorespacesafterend }} 
	
	}
\else \newcommand{\note}[1]{} \fi
\newcommand{\afsubm}[1]{ \ifnum \Debug = 1 {\mymargin{#1}}
\fi} %For notes on after-submission changes
\newcommand{\fig}[1]{Figure ``{#1}''}
\else \newcommand{\fig}[1]{Figure~\ref{#1}} \fi
\newcommand{\figs}[1]{Figures ``{#1}''}
\else \newcommand{\figs}[1]{Figures~\ref{#1}} \fi
\renewcommand{\color}[1]{}
\newcommand{\showFig}[2]{
   \begin{figure}[htbp]
   \centering
   \noindent
   \epsfbox{#1.eps}
   \caption{\small #2}
   \label{#1}
   \end{figure}
}
\newcommand{\N}{\ensuremath{\mathbb N}}
\newcommand{\R}{\ensuremath{\mathbb R}}
\newcommand{\Z}{\ensuremath{\mathbb Z}}
\newcommand{\cc}{\ensuremath{\mathcal C}}
\newcommand{\cp}{\ensuremath{\mathcal P}}
\newcommand{\oo}{\ensuremath{\omega}}
\newcommand{\Gam}{\ensuremath{\Gamma}}
\newcommand{\sig}{\ensuremath{\sigma}}
\newcommand{\sm}{\backslash}
\newcommand{\restr}{\upharpoonright}
\newcommand{\isom}{\cong}
\newcommand{\pth}[2]{\ensuremath{#1}\text{--}\ensuremath{#2}~path}
\newcommand{\pths}[2]{\ensuremath{#1}\text{--}\ensuremath{#2}~paths}
\newcommand{\g}{\ensuremath{G\ }}
\newcommand{\G}{\ensuremath{G}}
\newcommand{\Lr}[1]{Lemma~\ref{#1}}
\newcommand{\Tr}[1]{Theorem~\ref{#1}}
\newcommand{\Sr}[1]{Section~\ref{#1}}
\newcommand{\Cr}[1]{Corollary~\ref{#1}}
\newcommand{\Cg}{Cayley graph}
\renewcommand{\iff}{if and only if}
\newcommand{\fe}{for every}
\newcommand{\st}{such that}
\newcommand{\ti}{there is}
\newcommand{\ta}{there are}
\newcommand{\obda}{without loss of generality}
\newcommand{\wrt}{with respect to}
\newcommand{\labtequ}[2]{ \begin{equation} \label{#1} 	\begin{minipage}[c]{0.9\textwidth}  #2 \end{minipage} \ignorespacesafterend \end{equation} }
\newcommand{\mymargin}[1]{% <- dieses % verhindert ein ungewolltes Leerzeichen
  \marginpar{%
    \begin{minipage}{\marginparwidth}\small%
      \begin{flushleft}%
        {\color{blue}#1}%
      \end{flushleft}%
   \end{minipage}%
  }%
}%
\newcommand{\mySection}[2]{}
\newcommand{\DB}{\cite{diestelBook05}}
\newcommand{\citeTIIc}{\cite[Theorem~11.10]{cayley3}}
\title{The planar cubic Cayley graphs of connectivity 2}
\author{Agelos Georgakopoulos\thanks{Supported by FWF grant P-19115-N18.} \medskip \\
  {Technische Universit\"at Graz}\\
  {Steyrergasse 30, 8010}\\
  {Graz, Austria}\\
}
\begin{document}
\maketitle

\begin{abstract}

We classify the planar cubic Cayley graphs of connectivity 2, providing an explicit presentation and embedding for each of them. Combined with \cite{cayley3} this yields a complete description of all planar cubic Cayley graphs.
%This is part of a project that  classifies all planar cubic Cayley graphs,  

\end{abstract}

\section{Introduction}
This paper is part of a series of related papers on planar Cayley graphs \cite{vapf,cayley3,am,agmh}. A major aim of this project is to provide a complete description of the \defi{cubic} planar Cayley graphs, i.e.\ those in which every vertex is adjacent with precisely three other vertices. This analysis provides surprising new examples, contradicting past conjectures, but also a good insight into planar Cayley graphs in general; see \cite{cayley3}. 
%One of the main goals of this project is to refine the celebrated theorem of Stallings \cite{stallings68,stallings71}, 
%This project is motivated by an attempt to  %and indeed for these graphs we obtain a related assertion, providing splittings into subgroups, that holds
%providing an assertion valid for every \Cg\ of a group rather than an abstract assertion on the group itself. For the planar cubic Cayley graphs we do achieve such refinements; see \cite{cayley3} for details. 

The aim of the current paper is a classification of the planar cubic \Cg s of connectivity 2, yielding an explicit presentation and embedding for each of those graphs. The \defi{connectivity} of a graph \g is the smallest cardinality of a set of vertices separating \G. \Cg s of connectivity 1 are easy to describe. It is very common \cite{DrSeSeCon,dunPla} to consider graphs of connectivity 2 separately from graphs of higher connectivity when studying planar \Cg s, and this is so for a good reason: by a classical theorem of Whitney \cite[Theorem 11]{whitney_congruent_1932}, planar graphs of connectivity at least 3 have a unique, up to homeomorphism, embedding in the sphere $S^2$, and can thus be analysed taking advantage of this fact.

Our main result is
\begin{theorem} \label{Tmain}
Let \g be a planar cubic \Cg\ of connectivity 2. Then precisely one of the following is the case:
\begin{enumerate}\addtolength{\itemsep}{-0.5\baselineskip}
\item \label{i} $G \isom Cay \left<a,b\mid b^2, (ab)^n\right>$, $n\geq 2$; 
\item \label{ii} $G \isom Cay \left<a,b\mid b^2, (aba^{-1}b^{-1})^n\right>$, $n\geq 1$;
\item \label{iii} $G\isom Cay \left<a,b \mid b^2, a^4, (a^2b)^n \right>, n\geq 2$;

\item \label{iv} $G \isom Cay \left< b,c,d \mid b^2, c^2, d^2, (bc)^2, (bcd)^m\right>$, $m\geq 2$;
\item \label{v} $G \isom Cay \left<b,c,d \mid b^2, c^2, d^2, (bc)^{2n}, (cbcd)^m\right>$, $n,m\geq 2$;
\item \label{vi} $G \isom Cay \left<b,c,d\mid b^2, c^2,d^2, (bc)^n, (bd)^m\right>$, $n,m\geq 2$;
\item \label{vii} $G \isom Cay \left<b,c,d\mid b^2, c^2,d^2, (b(cb)^n d)^m\right>, n,m\geq 2$;
\item \label{viii} $G \isom Cay \left<b,c,d\mid b^2, c^2,d^2, (bcbd)^m \right>$, $m\geq 1$;
\item \label{ix} $G \isom Cay\left<b,c,d\mid b^2, c^2, d^2, (bc)^n, cd\right>$, $n\geq 1$ (degenerate cases with redundant generators and \g finite). 
\end{enumerate}
Conversely, each of the above presentations, with parameters chosen in the specified domains, yields a planar cubic \Cg\ of connectivity 2.
\end{theorem}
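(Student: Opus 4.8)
The plan is to prove the two implications separately; the forward direction --- every planar cubic Cayley graph of connectivity $2$ occurs in the list --- is where essentially all the difficulty lies, while the converse reduces to exhibiting embeddings.

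\textbf{Forward direction: locating a hinge.} Let $G=Cay(\Gamma,S)$ be planar, cubic and of connectivity $2$. Cubicity forces $S$, once the trivial redundancies recorded separately in case (ix) are removed, to have exactly one of two shapes: either $S=\{a,b\}$ with $b^2=1$ and $a$ of order $\ge 3$ (possibly infinite), which will lead to cases (i)--(iii), or $S=\{b,c,d\}$ with $b^2=c^2=d^2=1$, which will lead to cases (iv)--(viii). Since the connectivity is exactly $2$, $G$ is $2$-connected but not $3$-connected, hence it has a $2$-separator; the first substantial step is to produce a \emph{hinge}, that is, a $2$-separator $\{x,y\}$ positioned as symmetrically as possible with respect to a fixed planar embedding and to the colours of the edges at $x$ and at $y$. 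Because only boundedly many local colour-configurations occur at a vertex, a pigeonhole argument over the orbit of a $2$-separator under the vertex-transitive action of $\Gamma$ yields such a hinge; one then shows that a hinge together with a suitable arc closes up to a \emph{dividing cycle} (a cycle separating $G$ into parts that are both large), and that the shortest dividing cycles through the orbit of a hinge cut $G$ into pairwise isomorphic finite \emph{bricks} glued along the hinges.

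\textbf{Forward direction: bricks and how they fit together.} Fix a VAP-free planar embedding of $G$, which exists and is essentially canonical by the results of \cite{vapf}; combined with Whitney's uniqueness theorem for $3$-connected planar graphs and with the classification of the $3$-connected planar cubic Cayley graphs from \cite{cayley3} (via \citeTIIc), this forces each brick to be either a $3$-connected planar cubic graph --- hence on the known list --- or one of a short explicit list of small degenerate pieces (cycles, paths, a single or doubled edge), and it forces the cyclic order in which the bricks are reattached around a hinge to be a non-crossing pattern read off from the embedding. Cubicity is then very restrictive on the global structure tree recording the gluing: only three half-edges are available at a vertex to be split between ``inside a brick'' and ``across a hinge'', so this structure tree must be a double ray, a regular tree of small degree, or a single cycle of bricks. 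Reading $\Gamma$ off the resulting graph of groups by Bass--Serre theory --- $\Gamma$ is an amalgamated free product or an HNN extension over the finite subgroup carried by the hinge --- and simplifying the induced presentation by Tietze transformations, each admissible configuration collapses to exactly one of the presentations (i)--(viii), the parameters $n$ and $m$ recording the size of a brick and the length of the gluing cycle; the situations in which $S$ genuinely carries a redundant generator and $G$ is finite are precisely what is collected in (ix).

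\textbf{Converse and disjointness.} For each of the nine presentations one checks directly that the Cayley graph is cubic (three generators, each involution contributing one --- or, in (ix), a doubled --- edge at a vertex), that it has a planar embedding (exhibited explicitly by tiling the plane, or the sphere in the finite cases, with the fundamental brick in the pattern prescribed by the relators), and that its connectivity is exactly $2$ (a hinge is displayed, and no single vertex separates). Finally one verifies that the nine families are pairwise non-isomorphic --- separating them by the orders of the generators, the lengths of the shortest relators, finiteness of $G$, and the combinatorial type of the canonical embedding --- which gives the ``precisely one of the following'' assertion.

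\textbf{Main obstacle.} The crux is the completeness of the middle step: enumerating all possible bricks and all possible non-crossing gluing patterns with no omission, keeping the analysis finite by using cubicity and planarity in tandem, and then handling the low-parameter boundary cases ($n$ or $m$ equal to $1$, or $S$ with a redundant generator) where the brick--hinge picture partially degenerates and the bookkeeping is most delicate; it is also there that one must rule out that two apparently different configurations secretly describe the same graph.
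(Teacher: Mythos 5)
Your high-level strategy (cut along $2$-separators, classify the pieces, reassemble via a structure tree and Bass--Serre theory) is a recognisable alternative to what the paper does, but as written it contains concrete errors and the central step is not carried out. First, your starting point ``fix a VAP-free planar embedding of $G$, which exists and is essentially canonical'' is false: by \Cr{cvap} of this paper, the graphs of types \ref{iii}, \ref{iv}, \ref{v} and \ref{vii} admit \emph{no} \vapf\ embedding at all, and embeddings of connectivity-$2$ planar graphs are far from canonical, so the framework you build on this embedding collapses for several of the families you are supposed to discover. Second, the ``bricks'' obtained by splitting along $2$-separators are in general \emph{not} Cayley graphs, so you cannot invoke the classification of \tcon\ planar cubic \Cg s from \cite{cayley3} to list them; moreover they need not be finite (in type \ref{vii} the two-coloured subgraphs are double rays), so the claim that shortest dividing cycles cut $G$ into ``pairwise isomorphic finite bricks'' fails. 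Third, the heart of the theorem is the derivation of the explicit presentations, and your proposal disposes of it in one sentence (``reading $\Gamma$ off the graph of groups \ldots and simplifying by Tietze transformations''); identifying the edge and vertex groups of such a splitting and showing that the resulting presentation is exactly one of (i)--(viii), with the stated parameter ranges and with the low-parameter degeneracies excluded, is precisely the work that needs to be done and is nowhere sketched.

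For comparison, the paper never uses dividing cycles, structure trees or graphs of groups. It splits by the number of generators and then argues at the level of words: a shortest path $P$ whose endvertices separate $G$ is analysed (\Lr{Lk}, \Lr{Lk2}) to show $x^{-1}y$ is an involution of a restricted form; the resulting \sepe\ or two-coloured separator is used to ``shortcut'' relators of a \red\ presentation, forbidding subwords such as $a^2$, $cd$ or $cbd$, which pins down the possible simple relators and hence the presentation (via \Lr{relcc}); planarity and the \prem s are then obtained either from \Cr{Macay} or by explicit inductive cut-and-paste constructions on the sphere (Theorems \ref{Tki}, \ref{Tkiv}, \ref{Tkii}, \ref{TkiiiB}, \Cr{TkiiiA}), with only one case (\ref{vii}) imported from \cite{cayley3}. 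If you want to pursue your decomposition route you would need to replace the VAP-free assumption, prove directly what the pieces of the splitting can be without assuming they are Cayley graphs, and actually extract the presentations from the graph of groups; at present these are genuine gaps, not details.
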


Except for the above presentations we also construct embeddings of these  graphs \st\ 
the corresponding group action on the graph carries facial walks to facial walks (\Cr{cprem}); see also \Sr{secDem}. 
Tables~\ref{table1} and \ref{table2} summarise some information about these embeddings and some basic properties of the graphs, yielding a  more structured presentation of the various possibilities of \Tr{Tmain}. The interested reader will also find further information, not included in these tables, throughout  the course of the proofs. Moreover, in the last section of the paper we point out some interesting corollaries, which are extended in \cite{cayley3} to all cubic planar \Cg s.

This paper contributes to the complete classification of the cubic planar \Cg s of \cite{cayley3} not only by settling the special case of graphs of connectivity two, but also by providing building blocks for the construction of some of the \tcon\ ones. For example, the graphs of type \ref{vi} of \Tr{Tmain} and their embeddings that we construct here are used in \cite{cayley3} to produce \tcon\ \Cg s that have no finite face boundaries, thus contradicting a conjecture of Bonnington and Watkins \cite{BoWaPla}.

% (see Theorems \ref{Tki}--\ref{TkiiiB}). 

\begin{sidewaystable}[h!b!p!]

\setlength{\extrarowheight}{10pt} 
\begin{tabular}{|p{.25\linewidth}|p{.25\linewidth}|p{.25\linewidth}|p{.25\linewidth}|}
\hline
\multicolumn{4}{|c|}{$G = Cay\left<a,b \mid b^2,\ldots\right>$, $G$ is planar, and $\kappa(G)=2$}  \\
\hline

\multicolumn{2}{|p{.5\linewidth}|}{$a^n=1$.}%
	
	& \multicolumn{2}{m{.5\linewidth}|}{$a^n\neq1$. Then $G$ has a \prem\ $\sig$ in which $a$ preserves spin.} 	\\
\hline

\multicolumn{2}{|p{.5\linewidth}|}{ $\Gamma\isom  \left<a,b \mid b^2, a^4, (a^2b)^n \right>, n\geq 2$. Thus \g has a \prem\ in which $a$ reverses spin and $b$ preserves spin.}
%$G= Cay \left<a,b \mid b^2, a^4, a^2ba^2b \right>$ and $b$ preserves spin in \sig		&	 	 $G= Cay \left<a,b \mid b^2, a^4, a^2ba^{-2}b \right>$ and $b$ changes spin in \sig%	
	& $\Gamma\isom \left<a,b\mid b^2, (ab)^n\right>$, $n\geq 2$ and $b$ preserves spin in \sig.	&	$\Gamma \isom \left<a,b\mid b^2, (aba^{-1}b^{-1})^n\right>$, $n\geq 1$ and $b$ reverses spin in \sig. \\
\hline
\end{tabular}
\caption{Classification of the cubic planar Cayley graphs with 2 generators and connectivity 2 (types \ref{i} to \ref{iii} of \Tr{Tmain}).}
\label{table1}

%\end{sidewaystable}
\

\

%\begin{sidewaystable}[h!b!p!]

\setlength{\extrarowheight}{10pt} 
\begin{tabular}{|p{.2\linewidth}|p{.2\linewidth}|p{.2\linewidth}|p{.2\linewidth}|p{.2\linewidth}|}
\hline
\multicolumn{5}{|c|}{$G = Cay\left<b,c,d \mid b^2,c^2,d^2,\ldots\right>$, $G$ is planar, and $\kappa(G)=2$}  \\
\hline

\multicolumn{3}{|p{.6\linewidth}|}{$(bc)^n=1$ (i.e.\ $G$ has a 2-coloured cycle).}%
	
	& \multicolumn{2}{|p{.4\linewidth}|}{$G$ has no 2-coloured cycle.} \\
\hline

$G$ has no \sepe, $\Gam \isom \left< b^2, c^2, d^2 \mid (bc)^2, (bcd)^m\right>$, $m\geq 2$, and \g has a \prem\  in which all edges preserve spin.		
	&	$G$ has no \sepe, $\Gam\isom \left<b^2, c^2, d^2 \mid (bc)^{2n}, (cbcd)^m\right>$, $n,m\geq 2$, and \g has a \prem\  in which $c$ preserves spin and $b,d$ reverse spin.
	&	$G$ has a \sepe, $\Gam\isom \left<b^2, c^2, d^2\mid (bc)^n, (bd)^m\right>$, $n,m\geq 2$, and \g has a \prem\ in which all edges reverse spin.
	&	$G$ has no \sepe, $\Gam\isom \left<b^2, c^2, d^2\mid (b(cb)^nd)^m\right>$, $n,m\geq 2$,  and \g has an embedding in which all edges preserve spin.
	& 	$G$ has a \sepe\ $\Gam\isom \left<b^2, c^2, d^2\mid (bcbd)^n \right>$, $n\geq 1$, and \g has an embedding in which only $b$ preserves spin.\\

\hline
	
%$b$ preserves spin in \sig	&	$b$ reverses spin in \sig%	
%	& $b$ preserves spin in \sig	&	$b$ reverses spin in \sig \\
%\hline
\end{tabular}
\caption{Classification of the infinite cubic planar Cayley graphs with 3 generators and connectivity 2 (types \ref{iv} to \ref{viii} of \Tr{Tmain}).}
\label{table2}
\end{sidewaystable}

\medskip

In many cases we obtain embeddings of our graphs in the sphere using only the fact that their connectivity is 2, without assuming a priori that the graphs are planar. Thus we obtain the following result, which describes the non-planar cubic \Cg s of connectivity 2.

\begin{corollary} \label{cornp}
Let \g be a non-planar cubic \Cg\ of connectivity 2. Then one of the following is the case:
\begin{enumerate}
\item $G \isom Cay\left<a,b\mid b^2, a^n, \ldots \right>$, $n>2$; or
\item $G\isom Cay\left<b,c,d\mid b^2, c^2, d^2, (bc)^k, \ldots \right>$, $k\geq 3$, \g has no \sepe, and every $bc$ cycle contains a pair $x,y$ of vertices separating the graph \st\ $x^{-1}y$ is an involution.
\end{enumerate}
\end{corollary}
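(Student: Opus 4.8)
The plan is to read the corollary off the proof of \Tr{Tmain} rather than to prove anything genuinely new. First recall that a cubic \Cg\ $Cay\langle S\mid\ldots\rangle$ falls into exactly two types according to the orders of the elements of $S$: either $S=\{a,b\}$ with $b^2=1$ and $a$ of order $>2$ (a vertex being incident with one $b$-edge and the two $a$-edges), or $S=\{b,c,d\}$ with $b^2=c^2=d^2=1$. I would prove the corollary in its contrapositive form: every cubic \Cg\ of connectivity $2$ that lies in neither of the two families of the statement is planar.

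The crucial point --- already signalled in the paragraph preceding the corollary --- is that the proof of \Tr{Tmain} is a tree of case distinctions governed solely by the connectivity-$2$ structure of $G$: whether $G$ has a \sepe, whether $G$ has a $2$-coloured cycle, whether $a$ has finite order, whether the elements $x^{-1}y$ arising from separating pairs $\{x,y\}$ are involutions, how the generators behave with respect to spin, and so on. At many leaves of this tree the argument constructs an explicit embedding of $G$ in $S^2$ \emph{without} using the hypothesis that $G$ be planar --- only $\kappa(G)=2$ enters. Such leaves can harbour no non-planar graph, so I would simply delete them; what is left are exactly the two families of the corollary. Spelling this out: in the two-generator case, if $a$ has infinite order then the construction yielding types~\ref{i} and~\ref{ii} uses only connectivity~$2$, so $G$ is planar; hence a non-planar two-generator $G$ must have $a$ of finite order $n$, and as $G$ is cubic $n\neq 1,2$, i.e.\ $n>2$ --- this is family~(1). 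In the three-involution case, if $G$ has no $2$-coloured cycle then the embeddings behind types~\ref{vii} and~\ref{viii} are again produced from connectivity~$2$ alone, so $G$ is planar; thus a non-planar three-involution $G$ has a $2$-coloured cycle, which we may take to be a $bc$-cycle, of length $2k$. The sub-cases $k=2$ (equivalently $b,c$ commute), ``$G$ has a \sepe'' (handled via type~\ref{vi} and its relatives), and ``some $bc$-cycle contains no separating pair $\{x,y\}$ with $x^{-1}y$ an involution'' are each, again, among those where the proof of \Tr{Tmain} builds a sphere embedding from $\kappa(G)=2$ alone. Deleting them leaves precisely: $G$ has no \sepe, $k\geq 3$, and every $bc$-cycle contains a separating pair $\{x,y\}$ with $x^{-1}y$ an involution --- family~(2). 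In both families I would, as in the statement, record only these necessary structural features and leave the remaining relators as ``$\ldots$'', since for a graph assumed merely to be non-planar we have no reason to expect a full presentation.

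The genuine work, and the main obstacle, is the bookkeeping this audit demands: one must walk through every leaf of the case tree of the proof of \Tr{Tmain}, decide whether the embedding produced there depends on planarity, and --- at the few leaves where it genuinely does --- extract the minimal structural description, checking that $k\geq 3$, the absence of a \sepe, and the separating-involution condition are exactly what survives. Two points need particular care: that in every ``planarity-free'' leaf the sphere embedding really is manufactured from $\kappa(G)=2$ alone (no auxiliary lemma invoked there may quietly assume planarity), and, conversely, that at the surviving leaves planarity is genuinely indispensable, so that the list in the corollary is neither redundant nor incomplete. Once this audit is done, the corollary follows with no further argument.
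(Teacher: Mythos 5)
Your proposal is correct and takes essentially the same route as the paper, whose entire proof of this corollary is the observation you describe: it simply cites \Tr{Tki}, \Tr{Tkii}, \Lr{Lk2} and \Lr{LkiiiA}, i.e.\ exactly those branches of the classification in which planarity is derived from $\kappa(G)=2$ rather than assumed. One small correction to your bookkeeping: the clause ``every $bc$ cycle contains a separating pair with $x^{-1}y$ an involution'' is not itself a leaf with a planarity-free embedding, but follows from \Lr{Lk2} applied to a shortest separating path together with the planarity-free assertion \eqref{bcfin} in the proof of \Tr{TkiiiB} (the colours of that path span finite cycles, which fixes which pair gets the name $b,c$) and vertex-transitivity, and similarly $k\geq 3$ comes from the planarity-free Case~1 construction there.
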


%This paper is structured as follows. 

\section{Definitions}

\subsection{\Cg s and group presentations}
We will follow the terminology of \cite{diestelBook05} for graph-theoretical terms and that of \cite{bogop} for group-theoretical ones. Let us recall the definitions most relevant for this paper.

Let \Gam\ be group and let $S$ be a symmetric generating set of \Gam. The Cayley graph $Cay(\Gam,S)$ of $\Gam$ \wrt\ $S$ is a coloured directed graph $\g= (V,E)$ constructed as follows. The vertex set of \g is \Gam, and the set of colours we will use is $S$.  For every $g\in \Gam, s\in S$ join $g$ to $gs$ by an edge coloured $s$ directed from $g$ to $gs$. Note that $\Gam$ acts on \g by multiplication on the left; more precisely, \fe\ $g\in \Gam$ the mapping from $V(G)$ to $V(G)$ defined by $x \mapsto gx$ is a \defi{\auto} of \G, that is, an automorphism of \g that preserves the colours and directions of the edges. In fact, \Gam\ is precisely the group of \auto s of \G.

If $s\in S$ is an \defi{involution}, i.e.\ $s^2=1$, then every vertex of \g is incident with a pair of parallel edges coloured $s$ (one in each direction). However, for simplicity we will draw only one, undirected, edge in this case.

We call a graph \defi{cubic}, if each of its vertices is adjacent with precisely three other vertices. 

%However, when calculating the degree of a vertex of a \Cg\ we will count only one edge for each such pair, and we will draw only one, undirected, edge in our figures. For example, if $S$ consists of three involutions then we consider the corresponding \Cg\ to be cubic. This convention is common in the literature, and it is neccessary if one wants to study the property of being a \Cg\ as a graph-theoretical invariant, like e.g.\ in Sabidussi's theorem \cite[Proposition~3.1]{BaAut}. 

Given a group presentation $\left< S \mid \mathcal R \right>$ we will use the notation $Cay \left< S \mid \mathcal R \right>$ for the \Cg\ of this group \wrt\ $S$.

If $R\in \mathcal R$ is any relator in such a presentation and $g$ is a vertex of $G=Cay \left< S \mid \mathcal R \right>$, then starting from $g$ and following the edges corresponding to the letters in $R$ in order we obtain a closed walk $W$ in $G$. We then say that $W$ is \defi{induced} by $R$; note that for a given $R$ \ta\ several walks in \g induced by $R$, one for each starting vertex $g\in V(G)$. If $R$ induces a cycle then we say that $R$ is \defi{\red}; note that this does not depend on the choice of the starting vertex $g$. A presentation $\left<a,b,\ldots \mid R_1,R_2,\ldots\right>$ of a group \Gam\ is called \defi{\red}, if $R_i$ is \red\ \fe\ $i$. In other words, if for every $i$ no proper subword of any $R_i$ is a relation in $\Gamma$. 

Define the (finitary) \defi{cycle space} \ccfg\ of a graph $G=(V,E)$ to be the vector space over $\Z_2$ consisting of those subsets of $E$ such that can be written as a sum (modulo 2) of a finite set of {circuits}, where a set of edges $D\subseteq E$ is called a \defi{circuit} if it is the edge set of a cycle of \G. Thus \ccfg\ is isomorphic to the first simplicial homology group of \g over $\Z_2$. The \defi{circuit} of a closed walk $W$ is the set of edges traversed by $W$ an even number of times. Note that the direction of the edges is ignored when defining circuits and \ccfg. The cycle space will be a useful tool in our study of \Cg s because of the following well-known fact which is easy to prove.
\begin{lemma} \label{relcc}
Let $G= Cay \left< S \mid R \right>$ be a \Cg\ of the group \Gam. Then the set of circuits of walks in \g induced by relators in $R$ generates $\cc_f(G)$. 

Conversely, if $R'$ is a set of words, with letters in a set $S\subseteq \Gam$ generating \Gam, such that the set of circuits of cycles of $Cay(G,S)$ induced by $R'$ generates \ccfg, then $\left< S \mid R' \right>$ is a presentation of \Gam.
\end{lemma}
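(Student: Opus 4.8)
\textbf{Proof plan for Lemma~\ref{relcc}.}
The plan is to prove the two directions separately, using the standard fact that for a connected graph $G$ the cycle space $\cc_f(G)$ is spanned by the fundamental circuits with respect to any spanning tree $T$: for each non-tree edge $e=xy$, the fundamental circuit $C_e$ is the unique circuit contained in $T+e$. For the first direction I would fix a spanning tree $T$ of $G=Cay(\Gam,S)$ and, after choosing the identity vertex $1$ as a base point, observe that every vertex $g$ is reached from $1$ by a unique path $P_g$ in $T$, which spells a word $w_g$ in the letters of $S$. Now take a non-tree edge $e=g\,\text{--}\,gs$ with colour $s\in S$. Then $w_g\, s\, w_{gs}^{-1}$ represents the identity in $\Gam$, hence lies in the normal closure of $R$; writing it as a product of conjugates $\prod_i u_i R_i^{\pm1} u_i^{-1}$ of relators, one checks that the fundamental circuit $C_e$ equals, in $\cc_f(G)$, the sum of the circuits of the closed walks induced by the $R_i$ started at the vertices $u_i$ (each conjugation $u R u^{-1}$ contributes a closed walk whose circuit is that of the $R$-walk based at the vertex $u$, since the ``tails'' spelled by $u$ and $u^{-1}$ cancel modulo $2$). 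Since the $C_e$ span $\cc_f(G)$ and each lies in the span of circuits of $R$-induced walks, so does all of $\cc_f(G)$; conversely every $R$-induced walk is a closed walk, so its circuit already lies in $\cc_f(G)$, giving equality.

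For the converse, suppose $R'$ is a set of words over $S\subseteq\Gam$ with $S$ generating $\Gam$, and the circuits of $R'$-induced cycles span $\cc_f\big(Cay(\Gam,S)\big)=:\cc_f(G)$. Let $F\Gam$ be the free group on the abstract alphabet $S$, let $N$ be the normal closure of $R'$ in $F\Gam$, and let $\Gam'=F\Gam/N=\langle S\mid R'\rangle$; there is a canonical surjection $\pi\colon\Gam'\to\Gam$ since every word in $R'$ is a relation in $\Gam$. I must show $\pi$ is injective. Given a word $w$ over $S$ that represents $1$ in $\Gam$, the corresponding closed walk $W_w$ based at $1$ in $G$ has a circuit lying in $\cc_f(G)$, hence (by hypothesis) is a sum of finitely many circuits of $R'$-induced walks. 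Reading this decomposition back in $F\Gam$ — tracking the walks with their base vertices and the tree-paths joining them — shows that $w$ lies in $N$, i.e.\ $w=1$ in $\Gam'$; hence $\ker\pi$ is trivial and $\langle S\mid R'\rangle$ presents $\Gam$.

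The main obstacle is the bookkeeping in going between the algebraic statement ``$w$ lies in the normal closure of $R$'' and the geometric statement ``the circuit of $W_w$ is a $\Z_2$-sum of circuits of $R$-induced walks''. One must be careful that conjugation by a word $u$, which algebraically contributes $uR u^{-1}$, geometrically contributes the $R$-walk based at the vertex spelled by $u$ together with a ``there-and-back'' along a $u$-walk whose edges are each traversed an even number of times and so vanish in $\cc_f(G)$; and conversely, that a $\Z_2$ cancellation of edges between several $R'$-walks can be realised as an honest product of conjugates in $F\Gam$ (this uses that one may insert commuting detours along tree-paths freely, since a tree contributes nothing to the cycle space). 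Once this translation is set up cleanly — essentially the standard correspondence between the cycle space of a Cayley graph and the relation module / second homotopy of the presentation complex — both directions follow; everything else is routine.
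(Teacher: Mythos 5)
The paper itself offers no proof of this lemma (it is invoked as a ``well-known fact which is easy to prove''), so your proposal has to be judged on its own terms. Your forward direction is fine and is the standard argument: fundamental circuits of a spanning tree span $\cc_f(G)$, each chord $e=\{g,gs\}$ yields a word $w_g s w_{gs}^{-1}$ in the normal closure of $R$, and in a product of conjugates $u R u^{-1}$ the tails spelled by $u$ are traversed twice, hence vanish modulo $2$, so each fundamental circuit is a $\Z_2$-sum of circuits of relator-induced walks (free reduction only removes spurs and so does not change circuits either).

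The converse, however, has a genuine gap at exactly the step you call the ``main obstacle'' and then declare routine. From ``the circuit of $W_w$ is a $\Z_2$-sum of circuits of $R'$-induced cycles'' one cannot in general conclude $w\in N$: mod-$2$ sums forget orientations and multiplicities, which is precisely the gap between $H_1(\,\cdot\,;\Z_2)=0$ and simple connectivity of the complex obtained by attaching discs along the relator cycles. A concrete instance where your inference breaks: let $\Gam=\Z_3\times\Z_3$, $S=\{x,y\}$, and $R'=\{x^3,\,y^3,\,(xy)^3,\,(xy^{-1})^3\}$. Every word of $R'$ induces a cycle in $Cay(\Gam,S)$ (a triangle or a hexagon of the $3\times 3$ toroidal grid), and the circuits of these cycles generate the whole $10$-dimensional space $\cc_f(G)$; for example, the $4$-cycle through the identity is the mod-$2$ sum of two $x$-triangles, two $y$-triangles and one hexagon of each of the two types. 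Yet $\left<x,y\mid x^3,y^3,(xy)^3,(xy^{-1})^3\right>$ surjects onto the free Burnside group $B(2,3)$ of order $27$ (all four relators die in any group of exponent $3$), so the normal closure $N$ of $R'$ is strictly smaller than the kernel $K$; in particular the word $[x,y]$, whose induced walk has its circuit in the above $\Z_2$-span, does not lie in $N$. So the proposed ``reading back'' of a $\Z_2$-decomposition into a product of conjugates of relators is not bookkeeping --- it fails, and no choice of base points and tree-paths repairs it. Any correct treatment of the converse must exploit more than the bare $\Z_2$-spanning hypothesis as you have encoded it (e.g.\ multiplicity/orientation information about how relator cycles tile a disc bounded by $W_w$, which is what is really available in the situations where the lemma is applied in this paper, the generating circuits being face boundaries of an embedding); indeed the example shows that the statement itself has to be read with this finer structure in mind rather than as a purely mod-$2$ assertion.
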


\subsection{Graph-theoretical concepts}
Let $G=(V,E)$ be a connected graph fixed throughout this section. Two paths in \g are \defi{independent}, if they do not meet  at any vertex except perhaps at common endpoints. 
If $P$ is a path or cycle we will use $|P|$ to denote the number of vertices in $P$ and  $||P||$ to denote the number of edges of $P$. Let $xPy$ denote the subpath of $P$ between its vertices $x$ and $y$.

A \defi{\dray} is a two-way infinite path in a graph.

A \defi{\sepe} of \g is an edge $e=xy$ \st\ the removal of the pair of vertices $x,y$  disconnects \G. A \sepe\ should not be confused with a \defi{bridge}, which is an edge whose removal separates \g  although its endvertices are not removed.

The set of neighbours of a vertex $x$ is denoted by \defi{$N(x)$}.

\G\ is called \defi{$k$-connected} if $G - X$ is connected for every set $X\subseteq V$ with $|X | < k$. Note that if \g is $k$-connected then it is also $(k-1)$-connected. The \defi{connectivity $\kappa(G)$} of \g is the greatest integer $k$ such that \G\ is $k$-connected.

\subsection{Embeddings into the plane} \label{secDem}	%DON'T CHANGE ORDER; N(x) needed

An \defi{embedding} of a graph \g will always mean a topological embedding of the corresponding 1-complex in the euclidean plane $\R^2$; in simpler words, an embedding is a drawing in the plane with no two edges crossing.

A \defi{face} of an embedding $\sig: G \to \R^2$ is a component of $\R^2 \sm \sig(G)$. The \defi{boundary} of a face $F$ is the set of vertices and edges of \g that are mapped by \sig\ to the closure of $F$. The \defi{size} of $F$ is the number of edges in its boundary. Note that if $F$ has finite size then its boundary is a cycle of \G.

A walk in \g is called \defi{facial} \wrt\ \sig\ if it is contained in the boundary of some face of \sig. 

An embedding of a \Cg\ is called \defi{\pr} if, intuitively, it embeds every vertex in a similar way in the sense that the group action carries faces to faces. Let us make this more precise.
Given an embedding \sig\ of a \Cg\ $G$ with generating set $S$, we consider \fe\ vertex $x$ of \g the embedding of the edges incident with $x$, and define the \defi{spin} of $x$ to be the cyclic order of the set $L:=\{xy^{-1} \mid y\in N(x)\}$ in which $xy_1^{-1}$ is a successor of $xy_2^{-1}$ whenever the edge $xy_2$ comes immediately after the edge $xy_1$ as we move clockwise around $x$. Note that the set $L$ is the same \fe\ vertex of $G$, and depends only on $S$ and on our convention on whether to draw one or two edges per vertex for involutions. This allows us to compare spins of different vertices. Note that if \g is cubic, which means that $|L|=3$, then \ta\ only two possible cyclic orders on $L$, and thus only two possible spins. Call an edge of \g \defi{spin-preserving} if its two endvertices have the same spin in \sig, and call it \defi{spin-reversing} otherwise. Call a colour in $S$ \defi{\pr} if all edges bearing that colour are  spin-preserving or all edges bearing that colour are spin-reversing in \sig. Finally, call the embedding \sig\ \defi{\pr} if every colour is \pr\ in \sig\ (this definition is natural only if \g is cubic; to extend it to the general case, demand that every two vertices have either the same spin, or the spin of the one is obtained by reversing the spin of the other). 

It is straightforward to check that \sig\ is \pr\ \iff\ every \auto\ of \g maps every facial walk to a facial walk. 

It follows from Whitney's theorem mentioned in the introduction that if \g is \tcon\ then its essentially unique embedding must be \pr. \Cg s of connectivity 2 do not always admit a \prem\ \cite{DrSeSeCon}. However, in the cubic case they do (see \Cr{cprem}), and a significant part of this paper is concerned with constructing these embeddings.

An embedding is \defi{Vertex-Accumulation-Point-free}, or \defi{\vapf} for short, if the images of the vertices have no accumulation point in $\R^2$. The following fact will allow us to easily deduce that certain \Cg s are planar looking only at the corresponding presentations.

\begin{corollary}[{\cite[Section 4]{vapf}}] \label{Macay}
Let  $\left< S \mid R \right>$ be a \red\ presentation and let $G= Cay \left< S \mid R \right>$ be the corresponding \Cg. If no edge of \g appears in more than two  circuits  induced by relators in $R$, then $G$ is planar and has a \vapf\ embedding the facial cycles of which are precisely the cycles of \g induced by relators in $R$.
\end{corollary}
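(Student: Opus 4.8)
The plan is to derive this from the extension of MacLane's planarity criterion to infinite locally finite graphs established in \cite[Section~4]{vapf}. Recall that a family $\ca$ of elements of a cycle space is called \red\ when no edge lies in more than two members of $\ca$. MacLane's classical theorem asserts that a finite graph is planar \iff\ its cycle space has a \red\ generating set, and the relevant infinite analogue states that a connected \lfg\ $G$ admits a \vapf\ embedding in $\R^2$ \iff\ $\cc_f(G)$ has a \red\ generating set; moreover, in the ``only if'' direction that embedding can be built from a prescribed \red\ generating set consisting of cycles in such a way that precisely this family occurs as its set of finite face boundaries.

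First I would assemble the hypotheses into the input required by that theorem. The graph $G = Cay\left<S \mid R\right>$ is connected, and locally finite since $S$ is finite. Let $\cgr$ denote the set of circuits of closed walks of $G$ induced by relators in $R$. Because the presentation is \red, each such walk is a cycle, so every element of $\cgr$ is the circuit of a genuine cycle of $G$; by \Lr{relcc} the set $\cgr$ generates $\cc_f(G)$; and by assumption no edge of $G$ lies in more than two of its members. Hence $\cgr$ is a \red\ generating set of $\cc_f(G)$ consisting of cycles. Applying the infinite MacLane theorem to $\cgr$ then produces a \vapf\ embedding of $G$ in $\R^2$ whose finite face boundaries are exactly the cycles of $G$ whose circuits lie in $\cgr$ — that is, the cycles of $G$ induced by relators in $R$ — and in particular shows that $G$ is planar. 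This yields all three assertions of the corollary.

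The one point that needs care, and what I expect to be the real obstacle, is the ``facial cycles are precisely the induced cycles'' clause; this is why the \emph{constructive} form of the infinite MacLane theorem is needed, rather than the bare existence of some \vapf\ embedding, since a cycle space may carry several distinct \red\ generating sets (already for finite graphs a face boundary may be traded for its symmetric difference with a neighbouring face). The construction itself proceeds by exhausting $G$ with an increasing sequence of finite connected subgraphs: the data ``which at most two members of $\cgr$ contain a given edge'' pins down, at each edge, the cyclic order of the faces meeting it and hence a rotation system, and an Euler-characteristic argument — legitimate because $\cgr$ generates all of $\cc_f(G)$ — shows that the resulting drawing is crossing-free, has no vertex accumulation point, and realises $\cgr$ as its set of finite faces. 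Making this bookkeeping rigorous in the presence of possibly infinitely many ends, and certifying VAP-freeness in the limit, is the technical heart of the matter, and for that step I would rely on \cite[Section~4]{vapf} rather than reproduce it.
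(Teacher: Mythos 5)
Your proposal is correct and follows essentially the same route as the paper: the corollary is obtained by combining \Lr{relcc} (the circuits induced by the relators of a \red\ presentation form a generating set of $\cc_f(G)$, here a 2-basis by the hypothesis on edges) with the constructive infinite extension of MacLane's planarity criterion (Thomassen's theorem, in the form used in \cite[Section~4]{vapf}), which realises any such 2-basis as the set of facial cycles of a \vapf\ embedding. Deferring the limit construction and the VAP-freeness bookkeeping to that reference is exactly what the paper does.
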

%\section{Known facts}

\comment{
	\section{MacLane's planarity criterion for \Cg s}

	A nice application of $\cc_f(G)$ is MacLane's classical theorem (see \cite{maclane37} or \DB), characterising the finite planar graphs as those graphs \g for which $\cc_f(G)$ has a \defi{2-basis}, that is, a generating set $B$ \st\ no edge of \g appears in more than two elements of $B$. The following theorem of Thomassen extends MacLane's planarity criterion to infinite graphs. 

\begin{theorem}[{\cite[Theorem 7.4.]{thoPla}}] \label{thobas}
Let $G$ be an infinite 2-connected graph. Then \g has a 2-basis if and only if $G$ is planar and has a \vapf\ embedding.
Moreover, if $B$ is any 2-basis of $G$, then $G$ has a \vapf\ embedding \sig\
such that $B$ is the set of facial cycles of \sig. 
\end{theorem}

This, combined with \Lr{relcc}, yields the following valuable tool, which will allow us to easily deduce that certain \Cg s are planar looking only at the corresponding presentations.

	\begin{corollary} \label{Macay}
	Let  $\left< S \mid R \right>$ be a \red\ presentation and let $G= Cay \left< S \mid R \right>$ be the corresponding \Cg. If no edge of \g appears in more than two  circuits  induced by relators in $R$, then $G$ is planar and has a \vapf\ embedding the facial cycles of which are precisely the cycles of \g induced by relators in $R$.
	\end{corollary}
} %end comment

%\section{The cubic Cayley graphs of connectivity 1}

\section{Cayley graphs with 2 generators} \label{sec2}

In the rest of the paper \g will always denote a \Cg\ and $\Gam(G)$, or just \Gam, will denote the corresponding group. In this section we consider cubic \Cg s on two generators $a,b$ where $b$ is an involution.

The following lemma will play an important role.

\begin{lemma}\label{Lk}	
Let $G= Cay\left<a,b\mid b^2, \ldots \right>$ be a Cayley graph with $\kappa(G)=2$, and let $P$ be a shortest path whose endvertices $x,y$ separate \G. Then $x^{-1}y$ is an involution, and it equals $b$ or $a^i$ for some $i\in\Z$. Moreover, $x$ sends three independent paths to $y$.
\end{lemma}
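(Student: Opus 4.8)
The plan is to translate so that $x$ is the identity vertex, read off from the cubicity of \g that the separating pair $\{x,y\}$ leaves at most three parts, and then use the minimality of $P$ to identify the group element $x^{-1}y$. Since left multiplication by $x^{-1}$ is a \auto\ of \G, we may assume $x=1$ and put $g:=x^{-1}y=y$, so that $\{1,g\}$ separates \g and $P$ has least length among paths whose endvertices separate \G. Observe first that $P$ is a \emph{geodesic}: every $1$--$g$ path has endvertices $\{1,g\}$, which separate \G, so a shorter one would contradict the choice of $P$; hence $n:=\|P\|=\mathrm{dist}(1,g)$ and, more strongly, $n$ is the least distance between two vertices which together separate \G.

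\emph{Three independent paths.} Let $C_1,\dots,C_k$ be the components of $G-\{1,g\}$. As $\kappa(G)=2$ we have $k\ge 2$, and each $C_i$ must be joined by an edge both to $1$ and to $g$ (otherwise $1$ or $g$ alone would separate \G). Since \g is cubic, $1$ and $g$ each carry only three edges, so $k\le 3$; and if $k=3$ then $1$ and $g$ send exactly one edge to each $C_i$ and $1g$ is not an edge. If $k=3$, routing through the three components yields three independent $1$--$g$ paths. If $k=2$, a third path still exists: if $1g$ is an edge it is itself a third $1$--$g$ path, internally disjoint from the two paths through $C_1$ and $C_2$; otherwise $1$, and by an edge count also $g$, sends two edges into one component, which we may take to be the component $C_1$ meeting the interior of $P$, and then $C_1\cup\{1,g\}$ has no cutvertex separating $1$ from $g$ — such a cutvertex would lie on $P$, say it is $v_j$ with $j\ge1$, whence $\{v_j,g\}$ would separate \g at distance at most $n-j<n$, contradicting the minimality of $n$ — so $C_1\cup\{1,g\}$ carries two independent $1$--$g$ paths, and with a path through $C_2$ we again have three. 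In all cases the three independent paths exhaust the three edges at $1$ and the three edges at $g$; in particular the last assertion of the lemma holds.

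\emph{Identifying $g$.} The three edges at $1$ bear the colours $b$, $a$ and $a^{-1}$ (recall \g is cubic and $b$ is the involution), so the three paths above leave $1$ along these three colours and enter $g$ along its edges to $gb$, $ga$, $ga^{-1}$. Applying the \auto\ $z\mapsto g^{-1}z$ carries $\{1,g\}$ to $\{g^{-1},1\}$ and $P$ to a $1$--$g^{-1}$ geodesic, so $\{1,g^{-1}\}$ separates \g at distance $n$ as well. If $g^2\ne 1$ these are two distinct separating pairs through $1$, both realising the minimal separating distance; one then derives a contradiction by gluing $P$ to a suitable translate of its reverse (through $1$, or through $g$) to obtain a path of length $2n$ between two powers of $g$, and tracing it through the components of $G-\{1,g\}$ while exploiting the cubicity at $1$ and $g$ and the minimality of $n$. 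Hence $g^2=1$, and then left multiplication by $g$ is an involutory \auto\ interchanging $1$ and $g$; it permutes the three colours at $1$ compatibly with those at $g$, and combining this with the fact that $P$ and its reverse are both geodesic words for $g$ forces that word to be the single letter $b$, so $g=b$, or to contain only the letters $a^{\pm1}$, so $g=a^i$ for some $i\in\Z$.

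The routine part is the middle step; the crux is the last one, where both the involution property of $x^{-1}y$ and its special form are wrung out of the minimality of $P$, and where keeping track of the translates of $P$ and of the colours at the two separating vertices is the real work.
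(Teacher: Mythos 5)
Your middle step ("three independent paths") leans on an unproved fact. In the two-component case with $xy$ not an edge you say that "by an edge count" both $x$ and $y$ send two edges into the component $C_1$ meeting the interior of $P$. An edge count only gives that each of $x,y$ sends two edges into \emph{some} component; a priori $x$ could send two edges into $C_1$ while $y$ sends two into $C_2$ and only one into $C_1$, and in that configuration your construction of three independent paths collapses (only one path through $C_1$ can end at $y$, and only one through $C_2$ can start at $x$). Ruling this out is precisely where the paper uses the minimality of $P$: if $x$ sent only one edge $xx'$ into the component meeting $P$, then $P$ would begin with $xx'$ and $\{x',y\}$ would again separate $G$, so $P-x$ would be a shorter candidate — this is the paper's first claim, that the component meeting $P$ receives four edges from $\{x,y\}$. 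Your subsequent cutvertex-plus-Menger argument is fine, but it presupposes exactly this missing fact.

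The more serious gap is the last part, which you yourself identify as the crux: it is a plan, not a proof. "One then derives a contradiction by gluing $P$ to a suitable translate of its reverse \ldots while exploiting the cubicity \ldots and the minimality of $n$" and "combining this \ldots forces that word to be the single letter $b$ \ldots or to contain only the letters $a^{\pm 1}$" assert the desired conclusions without an argument; in particular, being a geodesic word representing an involution does not by itself force monochromaticity (the word $ab$ with $(ab)^2=1$ is a counterexample to that inference in general), so something structural about the separator must be used. The paper's proof has real content here: it first shows that $P$ is monochromatic, by picking an edge $e$ of $P$ with the same colour as the edge $xx'$ leaving $\{x,y\}$ into the \emph{other} component $C'$, translating $e$ onto $xx'$ by a \auto, and deriving a contradiction (to the three-paths fact, to the four-edges fact, or to the minimality of $P$) in each possible position of the translated path. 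Only then does it conclude: if $P$ is a $b$-edge then $x^{-1}y=b$; if $P$ is an $a$-path, it shows $xx',yy'$ must be $b$-edges and translates a shortest \pth{x}{y}\ $P_3$ through $C'$ by $z=x^{-1}y$ to show that this translate must end at $x$, so $z$ swaps $x$ and $y$ — which yields both that $z$ is an involution and that $z=a^{\pm||P||}$. None of these steps appears in your write-up, so the central assertions of the lemma remain unproved.
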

% *** ---- *** 
\begin{proof}
We begin by proving that 
\labtequ{foure}{The component $C$ of $G - \{x,y\}$ meeting $P$ sends four edges to $\{x,y\}$.}
Indeed, suppose that $x$ sends only one edge $xx'$ to $C$. Then $P$ must contain the edge $xx'$ and, easily, $\{x',y\}$ also separates \G. This contradicts the choice of $P$, since the path $P - x$ is shorter and its endvertices also separate \G. Thus $x$ must send two edges to $C$, and by the same argument $y$ must send another two edges to $C$, which proves \eqref{foure}.

Our next aim is to prove that 
\labtequ{thrpaths}{There are 3 independent \pths{x}{y} in \G.}
To see this, note that no vertex of $C$ separates $N(x)$ from $N(y)$ in $C$. Indeed, if $v$ is such a vertex, then $\{x,v\}$ also separates \G. But $v$ must lie on $P$ since $P$ connects $N(x)$ to $N(y)$ in $C$. This contradicts the choice of $P$, since $xPv$ is a shorter candidate. By Menger's theorem \cite{diestelBook05}, this implies that there are two independent \pths{x}{y}\ $P_1,P_2$ through $C$. Note that by \eqref{foure} $\{x,y\}$ sends only two edges to $G - C$. If these edges are incident with two distinct components of $G - \{x,y\}$, then any of these edges is a bridge of \G, contradicting the fact that \g is 2-connected. Thus these edges are incident with the same component $C'\neq C$ of $G - \{x,y\}$, and there is an \pth{x}{y}\ $P_3$ in $G - C$. Combined with the paths $P_1,P_2$ this proves \eqref{thrpaths}.

By \eqref{foure} each of $x,y$ has a unique neighbour in $G - C$; denote these neighbours by $x',y'$ respectively. Next, we claim that 
\labtequ{Pmono}{$P$ is monochromatic.}
Indeed, if $P$ contains both colours $a,b$, then it contains an edge $e$ that has the same colour as the edge $xx'$. Thus we can translate $e$ to $xx'$ by a \auto\ $w$ of \G. Let $P':= w(P)$, and distinguish the following two cases: either the two endvertices of $P'$ are in distinct components of $G - \{x,y\}$, or not. In the first case, we obtain a contradiction to \eqref{thrpaths} since $\{x,y\}$ separates the endvertices of $P'$. In the second case, we distinguish two subcases: either $P'$ contains the edge $yy'$, or one of the endvertices of $P'$ is $x$ and the other lies in the component $C'$ of $G - \{x,y\}$; no other alternative exists because $P'$ contains $xx'$ and so if $x$ is not an endpoint then $P'$ meets both $C$ and $C'$, and the edges $xx',yy'$ separate $C$ from $C'$. Now in the first of these subcases, we obtain a subpath $P''$ of $P'$ with endvertices $x,y$ that meets $C'$. This contradicts the minimality of $P$ if $P''\neq P'$ or, proposition \eqref{foure} if $P''=P'$. In the second subcase, as $xx'$ is the only $x$-$C'$ edge incident with $x$ and the endvertices $x,v$ of $P'$ separate \G, it is easy to see that the vertices $x',v$ also separate \G. Since both these vertices lie on $P'=w(P)$, we obtain a contradiction to the choice of $P$ since $x'P'v$ is a shorter candidate. Thus we obtained a contradiction in all of these cases, and so \eqref{Pmono} must hold. 
\medskip

Now if the colour of $P$ is $b$, then as $b$ is an involution $P$ consists of a single $b$ edge and we obtain $z=x^{-1}y=b$.

Suppose that every edge of $P$ is labelled $a$ instead. If any of the edges $xx',yy'$ is also labelled $a$ then we can repeat the above argument to obtain a contradiction. Thus both $xx',yy'$ are labelled $b$ in this case. This means that the coset $D$ of the subgroup spanned by $a$ containing $x$, and thus also $y$ since $P$ is labelled $a$, is contained in $\overline{C}:= C \cup \{x,y\}$. Now consider the element $z:= x^{-1}y$ of $\Gamma(G)$; note that $yz$ lies on $D\subseteq C$. Recall that \ti\ an \pth{x}{y}\ $P_3$ in $G - C$ (see \eqref{thrpaths}), and note that any such $P_3$ must begin with the edge $xx'$ and finish with $y'y$. Choose $P_3$ so as to minimize its length. Now consider its translate $P_3z$ starting at $y$ instead of $x$. By the choice of $z$ this path begins with the edge $yy'$ and terminates in $\overline{C}$. Since $xx',yy'$ are the only edges connecting $C'$ to the rest of the graph, and since $y'$ lies in $C'$, the path $P_3z$ must also contain $xx'$. Then $P_3z$ has to  terminate at $x$, because otherwise the minimality of $|P_3|$ is contradicted by the subpath of $P_3z$ from $y$ to $x$. This means that $z=x^{-1}y$ is an involution, as it exchanges $x$ with $y$. By the choice of $z$ we have $z=a^{\pm ||P||}$ in this case as desired. 

%But as $z$ maps $x$ to $xa^{\pm ||P||}$, this contradicts the fact that  $a$ has infinite order. This, combined with  \eqref{Pmono}, means that every edge of $P$ is labelled $b$, and as $b$ is an involution $P$ consists of a single edge.
\end{proof}

\begin{theorem}\label{Tki}	
Let $G= Cay\left<a,b\mid b^2, \ldots \right>$ be a Cayley graph with $\kappa(G)=2$ in which $a$ has infinite order. Then either $\Gamma(G)\isom \left<a,b\mid b^2, (ab)^n\right>$ with $n\geq 2$,  or $\Gamma(G)\isom \left<a,b\mid b^2, (aba^{-1}b^{-1})^n\right>$ with $n\geq 1$. Thus $G$ is planar, and has a \prem\ in which $a$ preserves spin.

Conversely, each of the above presentations, with parameters chosen in the specified domains, yields a planar cubic \Cg\ of connectivity 2 in which $a$ has infinite order.
\end{theorem}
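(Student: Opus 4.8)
The plan is to use Lemma~\ref{Lk} as the structural entry point. Let $P$ be a shortest path whose endvertices $x,y$ separate $G$, and let $z := x^{-1}y$. Since $a$ has infinite order, Lemma~\ref{Lk} gives that $z$ is an involution equal to $b$ or to $a^i$ for some $i \in \Z$; moreover $x$ sends three independent paths to $y$, and (from the proof of Lemma~\ref{Lk}) $P$ is monochromatic. I would first dispose of the case $z=b$: here $P$ is a single $b$-edge, so a pair of endvertices of a $b$-edge separates $G$. Using that $G$ is cubic and $2$-connected, I would argue that deleting this $b$-edge and its two endvertices leaves exactly two pieces, each attached by a single $a$-edge at each of $x,y$; following the $a$-coloured walk through one side and closing up, one identifies the relator $(ab)^n$ for some $n\ge 2$ (the $a$-walk from $x$ must return to $y$, then the $b$-edge back, giving a cycle of the form $(ab)^n$), and Lemma~\ref{relcc} together with the fact that no edge lies in more than two such relator-cycles (Corollary~\ref{Macay}) gives planarity and the desired consistent embedding. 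The remaining case is $z = a^i$: then $P$ is an $a$-coloured path, $||P|| = |i|$, and the two ``outside'' edges $xx',yy'$ are both $b$-edges; I would trace the three independent $x$--$y$ paths and read off that $G$ is built from a ``ladder''-like arrangement whose relator is $(aba^{-1}b^{-1})^n$ with $n\ge 1$.

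The heart of the argument is translating the local separation picture into a global presentation. For this I would exploit the $\Gam$-action: since $\{x,y\}$ separates and $z=x^{-1}y$ is an involution, the element $z$ commutes appropriately with translation, and the component structure of $G-\{x,y\}$ repeats periodically along the $\langle a\rangle$-coset of $x$ (when $z=a^i$) or along the bi-infinite pattern forced by alternating the two sides (when $z=b$). Concretely, in the $z=b$ case: $x$ has one $a$-edge into each side $C, C'$; chasing $a$-edges inside $\overline C = C\cup\{x,y\}$ from $x$, the walk cannot leave $\overline C$ except through $x$ or $y$, so by finiteness/structure it returns to $y$; its length is some $n$, giving a closed walk $a^n b$ — but re-examining, the correct relator visible in the cubic graph is $(ab)^n$, obtained because each vertex on the $a$-path also carries a $b$-edge to the mirror $a$-path on the other side, and these $b$-edges together with the two $a$-paths tile the cylinder with $(ab)$-squares, i.e.\ with hexagons/quadrilaterals whose boundary word is $(ab)^n$. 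In the $z=a^i$ case the analogous tiling has boundary word $(aba^{-1}b^{-1})^n$: each ``brick'' is bounded by going along $a$, across a $b$-edge, back along $a^{-1}$, and across a $b$-edge. In both cases one checks that the cycles induced by these relators generate $\ccfg$ (via Lemma~\ref{relcc}) and that no edge appears in more than two of them, so Corollary~\ref{Macay} delivers both the planarity and a VAP-free, hence consistent, embedding; the fact that $a$ preserves spin is read off directly from the tiling, since consecutive $a$-edges bound the same two faces on the two sides.

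For the converse, I would invoke Corollary~\ref{Macay} directly: each presentation $\left<a,b\mid b^2,(ab)^n\right>$ and $\left<a,b\mid b^2,(aba^{-1}b^{-1})^n\right>$ is a \red\ presentation (no proper subword of the single relator is trivial, for the stated ranges of $n$), and in each case every edge lies in exactly two relator-cycles — for $(ab)^n$ each $a$-edge and each $b$-edge is shared by the two incident $n$-gons; for the commutator relator each edge is shared by two $4n$-gons. Hence each graph is planar with a VAP-free embedding whose faces are the relator cycles. That these graphs are cubic is immediate ($|S|=2$ with $b$ an involution gives degree $3$), and that $\kappa(G)=2$ follows because the two endvertices of any $b$-edge (resp.\ of a suitable $a^i$-path) separate the graph, while $2$-connectedness is clear from the tiling; finally $a$ has infinite order in each group, e.g.\ because the $a$-edges form bi-infinite monochromatic double rays in the embedding.

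The main obstacle I anticipate is the case analysis establishing that \emph{only} these two relators can occur — i.e.\ ruling out more exotic ways of gluing the two sides $C,C'$ of the separation, and showing the pattern is genuinely periodic rather than eventually irregular. This requires careful use of the minimality of $P$ (as in the proof of Lemma~\ref{Lk}), the cubic and $2$-connected hypotheses to pin down how many edges cross the separator and into which components, and a translation argument using the $\Gam$-action to propagate the local picture globally. Once the combinatorial skeleton is fixed, extracting the presentation and the embedding via Lemma~\ref{relcc} and Corollary~\ref{Macay} should be routine.
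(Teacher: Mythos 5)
There is a genuine gap, and it starts with the case division. Under the hypothesis that $a$ has infinite order, the alternative $z=x^{-1}y=a^i$ of \Lr{Lk} is vacuous: $a^i$ would be an involution, forcing $a^{2i}=1$. So the only live case is $z=b$, i.e.\ every $b$-edge is a \sepe, and \emph{both} target presentations $(ab)^n$ and $(aba^{-1}b^{-1})^n$ arise in that single case (the $z=a^i$ alternative is what leads to the $a^4$, $(a^2b)^n$ graphs of type \ref{iii}, treated separately when $a$ has finite order). Your proposed dichotomy ``$z=b\Rightarrow (ab)^n$, $z=a^i\Rightarrow(aba^{-1}b^{-1})^n$'' therefore cannot work. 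The structural pictures backing it are also wrong: since $a$ has infinite order, the monochromatic $a$-walk from $x$ never ``returns to $y$ and closes up'' (a relation $a^nb=1$ would make $b$ a power of $a$), and neither graph is a ladder/cylinder tiled by $(ab)$-squares or commutator ``bricks''. In both groups the graph is a tree-like arrangement of finite relator cycles (of length $2n$, resp.\ $4m$, alternating in a way dictated by whether $b$ preserves or reverses spin) glued in pairs along $b$-edges, with each $a$-edge on exactly one relator cycle; the $a$-edges form double rays passing through infinitely many such cycles.

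The part you yourself flag as ``the main obstacle'' --- showing that \emph{only} these two relators can occur --- is precisely the heart of the paper's proof, and your proposal offers no mechanism for it. The paper's route is a presentation-rewriting argument driven by the hinge property: starting from an arbitrary \red\ presentation, any relator containing $a^2$ induces a closed walk that must pass through both endvertices of the $b$-edge at the middle vertex (because that $b$-edge is a \sepe\ and the two incident $a$-edges lie in different components), so the relator can be split through that edge into two shorter relators; iterating removes all $a^{\pm2}$ subwords. Then one shows that after the subword $ab$ the next letter ($a$ or $a^{-1}$) is forced by the separation, so up to rotation and inversion there is a unique \red\ relator besides $b^2$, namely $(ab)^n$ or $(aba^{-1}b^{-1})^m$; planarity and the \prem\ then come from an explicit construction (or, as you suggest, essentially via \Cr{Macay}) \emph{after} the presentation is pinned down. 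Your converse direction is broadly in the spirit of the paper (explicit tree-of-cycles embeddings / \Cr{Macay}), though ``VAP-free, hence consistent'' is a non sequitur needing its own (easy) check, and the parenthetical ``endvertices of a suitable $a^i$-path separate'' repeats the same case confusion --- in both families the separating pairs are endvertices of $b$-edges.
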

% *** ---- *** 
\begin{proof}
Let $P$ be a shortest path whose endvertices $x,y$ separate \G. By \Lr{Lk} $x^{-1}z$ is an involution, and it equals $b$ or $a^i$ for some $i\in\Z$. But since $a$ has infinite order we have $a^{2i}\neq 1$, and so only the former can be the case. This proves that
\labtequ{bsep}{Every $b$ edge is a \sepe.}
We now have enough information about the structure of \g to enable as to work out a presentation of $\Gamma(G)$. We start by proving that
\labtequ{noaa}{There is a simple presentation $\left<a,b \mid R_1,R_2,\ldots\right>$ of $\Gamma(G)$ such that $a^2 \not \preceq R_i$ holds for every $i$,}
where we write $w \preceq R$ if any relator obtained $R$ from $R$ by rotation and inversion contains the word $w$, and write $w \not\preceq R$ otherwise.

To begin with, let  $\left<a,b \mid R_1,R_2,\ldots\right>$ be any simple presentation of $\Gamma(G)$. Suppose that $R_i$ contains the subword $a^2$. Consider a closed walk $W$ in \g induced by $R_i$. Thus $W$ traverses two consecutive edges $zx,xv$ labelled $a$. Now let $e=xy$ be the $b$ edge incident with $x$. By \eqref{bsep} $e$ is a \sepe. If the edges $zx,xv$ are incident with the same component of $G - \{x,y\}$, then it is easy to see that one of the edges incident with $y$ is a bridge, contradicting the 2-connectedness of \G\ (\fig{chair}). Thus $zx,xv$ are incident with distinct components of $G - \{x,y\}$, which implies that $W$, being a closed walk, must visit $y$.

\showFig{chair}{The situation in the proof of \eqref{noaa}.}

Now as $W$ contains both $x,y$, we can use the edge $e=xy$ to `shortcut' $W$ into two walks $W_1,W_2$, both starting and ending at $y$, such that the concatenation of the corresponding relations $R^1,R^2$ yields a word equivalent to $R_i$ (\fig{W}). Note that $R^1,R^2$ are simple since $R_i$ was. Thus we could replace $R_i$ by two relations $R^1,R^2$ to obtain a simple presentation of $\Gamma(G)$ in which the word $a^2$ appears less often. We can repeat this procedure as often as needed to replace $R_i$ by a finite family of relations $R^1,\ldots R^k$ in which the words $a^2$ and $a^{-2}$ do not appear at all. Doing so for each $i$ we obtain a presentation that establishes \eqref{noaa}. Note that we did not have to assume that $\Gamma(G)$ is finitely presented; this will follow soon.

\showFig{W}{The situation in the proof of \eqref{noaa}.}

So let $\left<a,b \mid R_1,R_2,\ldots\right>$ be a presentation of $\Gamma(G)$ as supplied by \eqref{noaa}.
%for every $i$ the closed walk in \g corresponding to $R_i$ is a cycle; in other words, no proper subword of $R_i$ is a relation of $\Gamma(G)$. 
Next, we claim that 
\labtequ{noaba}{There is no pair $i,j\in \N$ \st\  $aba \preceq R_i$ and $aba^{-1}\preceq R_j$.}
Indeed, suppose that $ab \preceq R_i$, and consider a cycle $K$ in \g induced by to $R_i$ such that both edges in a subpath $Q$ of $K$ induced by $ab$ are incident with the identity $\idg\in \Gamma(G)$. Applying \eqref{bsep} again to the $b$ edge $e$ incident with \idg, we obtain a separation of \g as in \fig{W}, with $x$ playing the role of \idg. Now since $K$ contains the subpath $Q$ containing $e$, and $K$ is not allowed to visit any vertex twice, the edge $f$ following $Q$ in $K$ is determined: it must be the other $a$ edge incident with the component meeting $Q$. This means that once we see the subword $ab$ in some relator $R_i$ the following letter (either $a$ or $a^{-1}$) is determined. This proves \eqref{noaba}.

Combining \eqref{noaa} with \eqref{noaba} it follows that once we see the letter $a$ or $a^{-1}$ in some $R_i$ the rest of the word is uniquely determined. In other words, there is, up to rotation of the letters and inversion, only one possible \red\ relation in $\Gamma(G)$ except for $b^2$: this relation is either $(ab)^n$ for some $n\geq 2$ or $(aba^{-1}b^{-1})^m$ for some $m\geq 1$.

%It remains to show that \g has the claimed embedding. This follows from \Cr{Macay}: the set $B$ of cycles of \g induced by the above relation generate $\cc_f(G)$. Moreover, $B$ is a 2-basis of \G. Indeed, every $a$ edge appears in precisely one such cycle and every $b$ edge appears in precisely two such cycles. Thus \g has an embedding \sig\ in which the elements of $B$ are precisely the finite face-boundaries. Note that every $bab$ path of \g is facial in \sig. This easily implies that every $a$ edge preserves spin. Moreover, the presentation of $\Gamma(G)$ determines the behaviour of the $b$ edges with respect to spin in \sig: if $\Gamma(G)\isom \left<a,b\mid b^2, (ab)^n\right>$ then every $b$ edge preserves spin, while if $\Gamma(G)\isom \left<a,b\mid b^2, (aba^{-1}b^{-1})^m\right>$ then every $b$ edge reverses spin in \sig.

Let us postpone the rest of the forward implication to prove the converse implication first.  Given one of the above group presentations, it is easy to explicitly construct a plane \Cg\ having this presentation: start with an infinite 3-regular tree $T$, and replace each vertex of $T$ by a cycle of length $2n$ or $4m$, with its edges coloured alternately $a$ and $b$. Then glue, \fe\ edge $uv$ of $T$, the two cycles replacing $u$ and $v$ along a $b$ edge. We leave the details to the reader. Note that $a$ preserves spin in any such embedding. Moreover, if $\Gamma(G)\isom \left<a,b\mid b^2, (ab)^n\right>$ then every $b$ edge preserves spin, while if $\Gamma(G)\isom \left<a,b\mid b^2, (aba^{-1}b^{-1})^m\right>$ then every $b$ edge reverses spin. It is also clear that each of the above presentations corresponds to a group in which $a$ has infinite order.

Combining what we have proved so far we can now finish off the forward implication: since \g must have one of the above presentations, it also has a \prem\ as constructed above.

%: every $a$ path is facial in \sig, and so if $a^n=1$ then there is a facial $n$-cycle labelled $a$, but this contradicts the fact that $B$ is the set of finite face-boundaries in \sig. It is also not hard to see that any $b$ edge is a \sepe\ in the corresponding \Cg. %Clearly, each of the above presentations gives rise to a distinct group. Thus there are infinitely many examples in the ...  Table~\ref{table1}.
\end{proof}

Next, we consider the case when the order of $a$ is finite.

\begin{theorem}\label{Tkiv}	
Let $G= Cay\left<a,b\mid b^2, a^n, \ldots \right>$ be a planar Cayley graph with $\kappa(G)=2$. Then $G \isom Cay \left<a,b \mid b^2, a^4, (a^2b)^n \right>, n\geq 2$. Thus \g has a \prem\ in which $a$ reverses spin and $b$ preserves spin. %\g has no \prem\ in which  $a$ preserves spin.

Conversely, \fe\ $n\geq 2$ the above presentation yields a planar cubic Cayley graph of connectivity 2 in which $a$ has order 4.
\end{theorem}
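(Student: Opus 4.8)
The plan is to exploit Lemma~\ref{Lk} together with the assumption that $a$ has finite order, say $a^n=1$ with $n$ minimal. First I would take a shortest path $P$ whose endvertices $x,y$ separate $G$, and invoke Lemma~\ref{Lk}: either $x^{-1}y=b$, or $x^{-1}y=a^i$ with $a^{2i}=1$. In the present situation $a$ has finite order, so both possibilities can genuinely occur, but the key point is that $a^{2i}=1$ forces $2i\equiv 0 \pmod n$, i.e.\ $i=n/2$ and $n$ is even. I would first argue that $b$ edges cannot all be \sepe s here (otherwise the argument of Theorem~\ref{Tki} would apply and give a group in which $a$ has infinite order, a contradiction, unless we land in a degenerate finite situation). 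Hence some separating pair arises from a power of $a$, so $n$ is even and $a^{n/2}$ is an involution whose two fixed-point-free translates $x,x a^{n/2}$ separate $G$; moreover by Lemma~\ref{Lk} there are three independent \pths{x}{y}. Analysing how $a$-labelled cycles interact with such a separation (as in the \fig{chair}/\fig{W} arguments of Theorem~\ref{Tki}, with $a^{n/2}$ now playing the structural role of $b$) should pin down $n/2=2$, i.e.\ $a^4=1$: if $n/2>2$ one produces either a bridge or a smaller separating path, contradicting $\kappa(G)=2$ or the minimality of $P$.

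Once $a^4=1$ is established, the strategy mirrors the cycle-space/relator bookkeeping of Theorem~\ref{Tki}. I would show, using Lemma~\ref{relcc}, that $\Gamma(G)$ has a \red\ presentation $\left<a,b\mid b^2,a^4,R_1,R_2,\ldots\right>$ and then argue that, apart from $b^2$ and $a^4$, there is essentially only one further \red\ relator, necessarily of the form $(a^2b)^k$. The mechanism is the same ``once you see a letter the rest of the word is forced'' argument: since $\{x,xa^2\}$ separates $G$ (as $a^2$ is now the relevant involution), any \red\ relator $R_i$ that uses $a$ must, upon reaching a vertex incident with an $a^2$-involution pair, continue in a determined way; tracing this shows the only possibility is an alternation of $a^2$-steps and $b$-steps, giving $R_i=(a^2b)^k$. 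Checking that $k\ge 2$ is needed for $\kappa(G)=2$ (the case $k=1$ degenerating) and that a single such relator suffices to generate $\ccfg$ completes the identification of the presentation.

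For the converse, given $n\ge 2$ I would construct $Cay\left<a,b\mid b^2,a^4,(a^2b)^n\right>$ explicitly and exhibit a \prem: start from the infinite $3$-regular tree $T$, replace each vertex by a cycle of length $4n$ whose edges are coloured so that reading around the cycle one sees the pattern dictated by $(a^2b)^n$ (blocks $a,a,b$ repeated $n$ times), and glue the cycles at adjacent tree vertices along their $b$ edges exactly as in the proof of Theorem~\ref{Tki}. In the resulting drawing $b$ clearly preserves spin (the gluing is symmetric across the $b$ edge), whereas each $a^2$-path turns the ``wrong'' way so that $a$ reverses spin; one then checks connectivity is exactly $2$ (the $b$ edges are \sepe s) and that $a$ has order $4$. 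Alternatively, Corollary~\ref{Macay} applies directly: no edge lies in more than two of the circuits induced by $\{b^2,a^4,(a^2b)^n\}$, so the graph is planar with the induced cycles as faces, which is automatically a \prem.

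I expect the main obstacle to be the first paragraph: ruling out the ``all $b$ edges are \sepe s'' scenario cleanly and then forcing $a^4=1$ (rather than some larger even order) from the separation structure. The combinatorics of how an $a$-coloured cycle can meet a separating involution pair $\{x,xa^{n/2}\}$ is where one must be careful --- essentially re-running the \fig{chair} and \fig{W} case analysis of Theorem~\ref{Tki} but keeping track of the length $n/2$ and extracting a contradiction from $n/2>2$. The relator-uniqueness step and the explicit construction should then be routine adaptations of the corresponding parts of Theorem~\ref{Tki}.
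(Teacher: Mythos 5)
The decisive step of your plan fails. You propose to force $a^4=1$ purely combinatorially, ``producing either a bridge or a smaller separating path'' by re-running the \fig{chair}/\fig{W} arguments with $a^{n/2}$ in the role of $b$ --- but nowhere in that argument (nor in your subsequent relator bookkeeping) do you use the hypothesis that \g is planar, and without planarity the conclusion is simply false. For example, $G=Cay\left<a,b\mid a^6, b^2, (a^3b)^2\right>$ (the amalgam $\Z_6 *_{\Z_2}(\Z_2\times\Z_2)$ over $\left<a^3\right>$) is a cubic \Cg\ of connectivity 2 in which $a$ has order $6$: the pairs $\{x,xa^3\}$ separate, no $b$ edge is a \sepe, and \Lr{Lk} is satisfied with $z=a^3$. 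This graph is non-planar --- an $a$-cycle together with the three internally disjoint outside paths joining its three opposite pairs is a subdivision of $K_{3,3}$ --- which is exactly why \Tr{Tkiv}, unlike \Tr{Tki}, carries planarity as a hypothesis. The paper's proof uses an embedding \sig\ from the outset: it first shows all $a$ edges behave uniformly with respect to spin, then shows that each separating pair of opposite vertices of an $a$-cycle $C$ needs its own side of $C$ for the connecting path $L$ (two such paths can neither meet nor share a side), and since $C$ has only two sides there are at most two such pairs, forcing $|C|=4$ and that $a$ reverses spin; the presentation is then read off from the resulting plane structure (\fig{target}). No variant of the \fig{chair}/\fig{W} case analysis can substitute for this topological counting, so your first paragraph has a genuine gap, not just a technical obstacle.

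Two further problems in the converse direction. Your fallback ``Corollary \ref{Macay} applies directly'' is wrong: each $a$ edge lies in three circuits induced by the relators $a^4$ and $(a^2b)^n$ (its own $4$-cycle, plus two distinct $(a^2b)^n$-cycles, one for each position of the edge inside an $a^2$-block), so the hypothesis of \Cr{Macay} fails; indeed \Cr{cvap} records that type \ref{iii} graphs admit no VAP-free embedding, whereas \Cr{Macay} would produce one. Also, in your explicit construction the cycles have length $3n$, not $4n$, and the graph is not obtained by replacing each tree vertex by a single cycle and gluing along $b$ edges as in \Tr{Tki}: the type \ref{iii} graph contains both the $a$-labelled $4$-cycles and the $(a^2b)^n$-cycles, with every vertex on one of the former and two of the latter, so the gluing pattern must be genuinely different (this is what \fig{target} encodes). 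Finally, in your second paragraph note that $a^4$ itself is a simple relator not of the form $(a^2b)^k$, so the claim that every relator using $a$ is forced into that form needs restating; but all of this is secondary to the missing use of planarity in the step that pins down the order of $a$.
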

% *** ---- *** 
\begin{proof}
Since \g is planar it has some embedding \sig. Let $C$ be an $a$ labelled cycle induced by $a^n$. We claim that either every edge of $C$ preserves spin in \sig\ or every edge of $C$ reverses spin in \sig. For suppose not. Then we can find consecutive edges $xy,yv$ on $C$ \st\ the $b$ edges $xz,yw$ incident with $x$ and $y$ respectively lie in the same side of $C$ and the $b$ edge $vu$ incident with $v$ lies in the other side (\fig{figk4}).

\showFig{figk4}{A contradictory situation in the proof of \Tr{Tkiv}.}

We claim that $\{x,y\}$ do not separate $z$ from $w$. For if $\{x,y\}$ separates \G, then it can only leave two components behind as \g cannot contain a bridge, and each of these components sends two edges to $\{x,y\}$. Note however that both $a$-labelled edges incident with $\{x,y\}$ are incident with a common component containing $C - \{x,y\}$, and so the other two edges $xz,yw$ must go to a common component as desired.

Using this we can now prove the stronger assertion that even $C$ does not separate $z$ from $w$. Indeed, let $R$ be a \pth{z}{w}\ that does not meet $\{x,y\}$, which exists by the above claim. If $R$ meets $C$ then let $Q$ be the subpath of $R$ from $z$ to the first encounter with $C$. We can rotate $C$ by one step to map $y$ to $x$ ---by multiplying \g with an element of $\Gamma(G)$--- to obtain a translate $Q'$ of $Q$ which, by an easy topological argument, intersects $Q$. Now combining subpaths of $Q$ and $Q'$ up to such an intersection we can obtain a \pth{w}{z}\ that does not meet $C$. 

So let $S$ be a \pth{z}{w}\ that does not meet $C$. Again, we can rotate $C$ by one step to map $y$ to $v$. Now this rotation translates $S$ to a \pth{w}{u}\ which does not meet $C$. This contradicts our assumption about the embedding \sig, and proves our claim that either every edge of $C$ preserves spin or every edge reverses spin. Translating $C$ to other $a$ cycles of \g and using a similar argument we can prove that 
\labtequ{aspin}{either every $a$ edge  preserves spin or every $a$ edge reverses spin in \sig.}
%

%Next, we claim that in fact only one of these two possibilities can occur: every $a$ edge reverses spin in \sig. 

%Indeed, suppose to the contrary that $a$ edges preserve spin, which means that \fe\ $n$-cycle labelled $a$ all $b$ edges incident with $C$ lie in one side of $C$. 

Let $P$ be a shortest path whose endvertices $x,y$ separate \G. \Lr{Lk} implies that either $y=xb$ or $y=xa^i$ and $a^{2i}=1$. If the former case it is easy to obtain a contradiction: as the edge $xy$ cannot be a bridge, there is a path connecting the two $a$ cycles that contain $x$ and $y$; but then $\{x,y\}$ cannot separate \g since all their neighbours lie in one component. Thus $x,y$ are opposite vertices of some $a$-labelled cycle $C$. 

Since $a$ is not an involution, for otherwise we are in one of the degenerate cases of type \ref{ix} of \Tr{Tmain}, there is another pair $\{x',y'\}\neq \{x,y\}$ of opposite vertices on $C$ which also form a separator. Now as $x'$ sends three independent paths to $y'$ by   \Lr{Lk}, $\{x,y\}$ does not separate $x'$ from $y'$. Thus, all four $a$ edges incident with $x$ or $y$ are incident with the same component $K$ of $G -  \{x,y\}$, which component contains $C - \{x,y\}$. As $x$ sends three independent paths to $y$, and only two of them can meet $K$, there is an \pth{x}{y}\ $L$ that does not meet $C$ except at its endvertices. By the same argument, \ti\ also an \pth{x'}{y'}\ $L'$ that does not meet $C$ except at its endvertices. Now if $L'$ intersects $L$ we easily get a contradiction to the fact $L$ is disconnected from $K$ by $\{x,y\}$ (\fig{LLp}). Thus $L'$ cannot  intersect $L$, and an easy topological argument implies that  $L'$ and $L$ lie in different sides of $C$.

\showFig{LLp}{A contradictory situation in the proof of \Tr{Tkiv}.}

Combined with \eqref{aspin} this implies that every $a$ edge reverses spin in \sig, since if $a$ edges preserve spin then all $b$ edges incident with $C$ must lie in one side of $C$. Moreover, it implies that 
\labtequ{ceqf}{$|C| = 4$.}
Indeed, otherwise there are at least three pairs of opposite vertices on $C$, and so at least two of the corresponding paths $L$ would have to share the same side of $C$.  

Let $M$ be the set of $a$-labelled cycles visited by $P$; note that $C\in M$. Easily, \fe\ cycle $D$ in $M$, only one of the sides of $D$ is met by $P$: otherwise we could shortcut $P$ using a subarc of $D$, but $P$ is chosen to have minimum length. Now note that for every vertex $v$ of \g \ti\ a $D\in M$ \st\ $v$ lies either on $D$ or in the side of $D$ not met by $P$ because, by \eqref{ceqf} and the fact that  every $a$ edge reverses spin, every edge incident with a vertex in $P \cup \bigcup M$ lies in one of those sides. Applying the same argument to the translate $P'$ of $P$ joining $xa$ to $ya$, and repeating for every $a$-labelled cycle, it is easy to see that \g has the structure of \fig{target}, and  the presentation $G = Cay \left<a,b \mid b^2, a^4, (a^2b)^m \right>$, where $m:=|M|\geq 2$.

%It turns out that there is precisely one \Cg\ with these properties, and it is shown in \fig{target} with two possible \prem s. To show that no other possibility exists, consider any other \Cg\ $H$ with these properties. Then starting with an orientation preserving isomorphism $f$ from one of the $a$ cycles of $H$ to one of the cycles of \fig{target} one can recursively extend $f$ to  a \auto\ of the whole graph $H$ to any of the graphs of \fig{target}. In particular, the two embeddings in that figure correspond to the same graph.

\epsfxsize=0.35\hsize
\showFig{target}{The graph of \Tr{Tkiv} for $m=4$.}

%Thus there is only one \Cg\ with these properties. It has the presentation $G = Cay \left<a,b \mid b^2, a^4, (a^2b)^2 \right>$. % and $G_2= Cay \left<a,b \mid b^2, a^4, a^2ba^{-2}b \right>$. 
The embedding claimed in the assertion can be seen in  \fig{target}. 
\end{proof}

\section{Cayley graphs with 3 generators} \label{sec3}

In this section we consider the case when \g is defined by three generators $b,c,d$, all of which are of course involutions since \g is cubic. We will distinguish two cases according to whether \g has a \sepe, i.e.\ an edge $e=xy$ \st\ the removal of the pair of vertices $x,y$  disconnects \G.

\subsection{Graphs with \sepe s}

\begin{theorem}\label{Tkii}	
Let $G= Cay\left<b,c,d\mid b^2, c^2,d^2, \ldots \right>$ be a Cayley graph with $\kappa(G)=2$ having a \sepe. Then either $G\isom Cay\left<b,c,d\mid b^2, c^2,d^2, (bc)^n, (bd)^m\right>$ for $m,n\geq 2$ or  $G\isom Cay\left<b,c,d\mid b^2, c^2,d^2, (bcbd)^n \right>$ for $n\geq 1$, but not both. In both cases \g is planar. In the first case it has an embedding in which  every edge reverses spin, while in the second  it has an embedding in which precisely one of the colours (that of the \sepe s) preserves spin.	

Conversely, each of the above presentations, with parameters chosen in the specified domains, yields a planar cubic \Cg\ of connectivity 2 with \sepe s.
\end{theorem}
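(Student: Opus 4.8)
The plan is to run an argument parallel to that of \Tr{Tki}, exploiting the fact that \g has a \sepe, which by the structure of cubic \Cg s forces a rigid ``ladder-like'' or ``tree-of-digons'' structure. Let $e=xy$ be a \sepe, coloured, say, $b$. The first step is to apply \Lr{Lk} (or rather the analogous reasoning: every colour is an involution here) to understand the separating pairs. Since \g is cubic with all of $b,c,d$ involutions, each vertex $x$ has its three incident edges coloured $b,c,d$; removing $x,y$ where $xy$ is the $b$ edge leaves exactly two components (no bridges, as \g is \iicon), and each of $x,y$ sends one $c$ edge and one $d$ edge into those two components. I would first establish the analogue of \eqref{bsep}: \emph{every $b$ edge is a \sepe}. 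This follows because \g is vertex-transitive (via the colour-automorphism action of \Gam), so if one $b$ edge is a \sepe, all are.

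The second step is to read off the possible \red\ relators, as in the proof of \Tr{Tki}. Fix the $b$ edge $e=xy$ at the identity. Since $e$ is a \sepe\ separating two components $C, C'$, any cycle $K$ through $e$ must enter $e$ from one component and leave into the other; this means that in any \red\ relator, the letter $b$ is both preceded and followed by a letter in $\{c,d\}$, and moreover ---arguing as for \eqref{noaba}--- once we have a subword of the form $cb$ or $db$ the following letter is determined, being the unique $c$- or $d$-edge incident with the ``other'' component. So starting from any occurrence of $b$ the relator is determined by finitely much local data; one then checks that up to rotation and inversion the only \red\ relators (besides $b^2,c^2,d^2$) are built from alternating patterns, and a short case analysis on whether a $c$ cycle and/or a $d$ cycle exists yields either the pair of relators $(bc)^n, (bd)^m$ (when there are both 2-coloured $bc$ cycles and $bd$ cycles, and no $cd$ cycle through the structure forces the ``grid of two ladders'' picture) or the single relator $(bcbd)^n$ (when $c$ and $d$ are ``braided'' around the $b$-ladder); these two possibilities are mutually exclusive because in the first case $(bc)^n$ and $(bd)^m$ are both \red\ while in the second they are not, and conversely. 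This plus \Lr{relcc} gives the presentations. The ``but not both'' is exactly the observation that the two families of \red\ relator-sets are disjoint and each graph has, up to the equivalences, a unique \red\ relator apart from the $b^2,c^2,d^2$.

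For the converse and the embeddings, I would argue as in \Tr{Tki}: given either presentation, one checks directly that no edge lies in more than two circuits induced by relators, so \Cr{Macay} gives that \g is planar with a \vapf\ embedding whose facial cycles are exactly the relator cycles. Concretely, for $(bc)^n,(bd)^m$ take a 3-regular tree $T$ and replace alternate vertices/edges so that each vertex sits on one $2n$-cycle alternately coloured $b,c$ and one $2m$-cycle alternately coloured $b,d$, glued along $b$ edges; inspecting spins shows every edge reverses spin (the two 2-coloured cycles at a vertex force the spin to flip across each edge). For $(bcbd)^n$ one builds the analogous ``ladder of $bcbd$-squares glued along $b$ edges along a 3-regular tree'', and inspecting this embedding shows $b$ preserves spin while $c,d$ reverse spin; one also checks $\kappa(G)=2$ (the $b$ edges are \sepe s) and that \g is infinite for the stated parameter ranges, completing the \pr ness claim via the criterion that \sig\ is \pr\ iff every \auto\ carries facial walks to facial walks.

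\medskip

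\emph{Main obstacle.} The delicate point, as in \Tr{Tki}, is the combinatorial bookkeeping that pins down the \red\ relators: one must argue carefully that the presence of the \sepe\ makes the relator ``self-determining'' from any occurrence of $b$ (the analogues of \eqref{noaa} and \eqref{noaba}), and then that exactly the two stated families arise and are mutually exclusive. The subcase split --- according to whether \g has a 2-coloured $c$-cycle and a 2-coloured $d$-cycle, versus $c$ and $d$ being braided around the $b$-ladder --- is where all the real work lies; the topological/spin arguments for the embeddings are then routine, essentially identical in spirit to the constructions at the end of the proof of \Tr{Tki}.
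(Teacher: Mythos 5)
Your overall strategy is the paper's: normalise so that every $b$ edge is a \sepe\ (by colour-transitivity), use the hinge to constrain the \red\ relators in the spirit of \Tr{Tki}, and get planarity, spins and the converse from \Cr{Macay} together with an explicit tree-of-cycles construction. However, the central step --- actually pinning down the two presentations --- is asserted rather than proved, and the mechanism you offer would not suffice as stated. The dichotomy that decides between $(bc)^n,(bd)^m$ and $(bcbd)^n$ is not ``whether $bc$- and $bd$-cycles exist'' (that is essentially the conclusion you are trying to reach); it is the a priori local alternative at a hinge $e=xy$: the $c$-edges at $x$ and at $y$ either lie in the same component of $G-\{x,y\}$ or in different ones (these are the only possibilities, since sending both edges of $x$ to one side would make $e$ a bridge). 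This is exactly the case split of \fig{bcut}, and all the subsequent subword exclusions depend on which side of it you are on. Moreover, your determination argument (``after $cb$ or $db$ the next letter is forced'') only controls what happens after an occurrence of $b$; by itself it cannot exclude relators containing the subword $cd$, nor $b$-free relators such as $(cd)^k$. The paper removes these by the analogue of \eqref{noaa}: if a \red\ relator contains $cd$, the induced cycle crosses between the two components at the middle vertex $x$ of the $cd$ subpath (in either configuration the $c$- and $d$-edges at $x$ lie in different components), hence must also pass through $y=xb$, and one shortcuts the relator through the hinge into two shorter ones; iterating kills all $cd$ subwords, after which simplicity alone rules out $cbd$ in Case I, resp.\ $cbc$ and $dbd$ in Case II, and the relators $(bc)^n,(bd)^m$, resp.\ $(bcbd)^n$, drop out. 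You name the analogues of \eqref{noaa} and \eqref{noaba} as the main obstacle but do not supply them, and your phrasing (``self-determining from any occurrence of $b$'') suggests you have not seen that the $cd$/$b$-free issue is the real content of the \eqref{noaa} analogue here.

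Two smaller omissions: you never argue that the relators you need are actually present (the paper's ``$m,n\neq 0$'' step, via \Lr{relcc} and the observation that otherwise one colour class would consist of bridges, contradicting 2-connectedness); and the spin statements of the theorem concern the forward direction, where the paper obtains them by applying \Cr{Macay} to the derived presentation (the relator cycles form a 2-basis with every $b$ edge in two of them and every other edge in one) and reading the spins off the facial structure, not merely by inspecting the converse construction. Your converse construction itself is fine and matches the paper's sketch, modulo the loose description of the underlying tree (the tree of 2-coloured, resp.\ $(bcbd)^n$-, cycles glued along $b$ edges is bi-regular of degrees determined by $n,m$, not $3$-regular).
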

% *** ---- *** 
\begin{proof}
Suppose \obda\ that every $b$ edge is a \sepe, and consider a $b$ edge $e=xy$. We distinguish two cases: either the $c$ edges incident with $x,y$ are incident with the same component of $G - \{x,y\}$ or not; see \fig{bcut}.

\showFig{bcut}{The two cases \wrt\ the distribution of the edges incident with $xy$ into components.}

{\bf Case I:}
In the former case, we can find a \red\ presentation of \Gam\ in which no relator contains $cd$ or $cbd$ as a subword. Indeed, let $\left<b,c,d \mid R_1,R_2,\ldots\right>$ be any \red\ presentation of \Gam. Suppose that $cbd \preceq R_i$ for some $i$, let $W_i$ be a closed walk in \g induced by $R_i$, and let $e$ be a $b$ edge in the middle of a $cbd$ subpath of $W_i$. Then, as one can easily check using \fig{bcut}, $W_i$ has to visit some endvertex of $e$ twice, contradicting the fact that the presentation is \red. 

Now suppose that $cd \preceq R_i$ for some $i$, and let again $W_i$ be a closed walk in \g induced by $R_i$. Let $x$ be the middle vertex of a $cd$ subpath of $W_i$, and consider the $b$ edge $e$ incident with $x$. Since $e$ is a \sepe, we have the situation of the left part of \fig{bcut} again, and we can use $e$ to `shortcut' $W_i$ into two walks $W^1,W^2$ 
such that the concatenation of the corresponding relations $R^1,R^2$ yields a word equivalent to $R$, similarly to what we did in \fig{W}. Thus we can replace $R_i$ by two relations $R^1,R^2$ to obtain a presentation of $\Gamma$ in which the word $cd$ appears less often. Repeating this procedure as often as needed, we obtain, as claimed, a \red\ presentation $\left<b,c,d \mid R'_1,R'_2,\ldots\right>$ of \Gam\ in which $cd, cbd \not\preceq R'_i$ \fe\ $i$.

The fact that no $R'_i$ contains the words $cd$ or $cbd$ easily implies that $R'_i$ contains at most two of the letters $c,b,d$. Thus each $R'_i\neq b^2,c^2,d^2$ is of the form $(bc)^n$ or  $(bd)^n$. Since the presentation is \red, it cannot be the case that $(bc)^n,(bc)^l$ with $n\neq l$ are both relators; similarly for $(bd)^n$. Thus \Gam\ has a presentation of the form $\left<b,c,d\mid b^2, c^2,d^2, (bc)^n, (bd)^m\right>$ where $n,m$ might take the value $0$, meaning that the corresponding relator is not present. 

Let us now check that $m,n\neq 0$. Indeed, by \Lr{relcc} the  set $B$ of cycles in \g induced by the relators in the above presentation generate $\cc_f(G)$. But if $n=0$ then no element of $B$ contains a $c$ edge, which implies that no cycle of \g contains a $c$ edge. It is an easy graph-theoretical fact that every $c$ edge must be a bridge in this case, which contradicts our assumption that \g is 2-connected. Similarly, $m\neq 0$ for otherwise every $d$ edge is a bridge.

We can now apply \Cr{Macay} to obtain an embedding \sig. Indeed, the set $B$ of cycles in \g induced by the relators in the above presentation form a 2-basis, in which every $b$ edge appears twice and every other edge appears once. It is straightforward to check that every edge reverses spin in \sig\ using the fact that the elements of $B$ are precisely the finite face-boundaries.

\medskip

{\bf Case II:}
In the second case (\fig{bcut} right), using very similar arguments as above we obtain a \red\ presentation $\left<b,c,d \mid R'_1,R'_2,\ldots\right>$ of \Gam\ in which no relator contains $cbc$ or $cd$ or $dbd$ as a subword. This easily implies that the only possible kind of relator $R'_i$, except for $b^2,c^2,d^2$, is $(bcbd)^n$, and again since the presentation is \red\ only one value for $n$ is allowed. Such a relator with $n>0$ must exist for otherwise \g is not 2-connected.

Similarly to Case I, the  set $B$ of cycles in \g induced by this relator is a 2-basis of \g and applying \Cr{Macay} we obtain a \prem\ of \g in which $b$ edges preserve spin and all other edges reverse spin.
\medskip

The converse implication can, in both cases, be proved easily by a construction similar to that of \Tr{Tki}.
\end{proof}

\subsection{Graphs with no \sepe s}

We continue our analysis with the case when \g has no \sepe. The following is similar to \Lr{Lk}.

\begin{lemma}\label{Lk2}	
Let $G= Cay\left<b,c,d\mid b^2, c^2, d^2, \ldots \right>$ be a Cayley graph with $\kappa(G)=2$ and no \sepe, and let $P$ be a shortest path whose endvertices $x,y$ separate \G. Then  $P$ is two-coloured and $x^{-1}y$ is an involution.% Moreover, $x$ sends three independent paths to $y$.
\end{lemma}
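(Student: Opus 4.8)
The plan is to follow the proof of \Lr{Lk} closely, adapting it to three involutory generators and to the hypothesis that $G$ has no \sepe.

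\emph{Structural preliminaries.} Since $G$ has no \sepe, $x$ and $y$ are non-adjacent, so each of them sends all three of its edges into components of $G-\{x,y\}$; in particular $||P||\ge 2$, and since $b,c,d$ are involutions $P$ never uses the same colour on two consecutive edges, so $P$ carries at least two colours. Let $C$ be the component of $G-\{x,y\}$ containing the interior of $P$. Exactly as in \Lr{Lk} (using minimality of $P$ and the absence of bridges) each of $x,y$ sends precisely two edges to $C$, and the two remaining edges run to one common further component $C'$; thus $G-\{x,y\}$ has exactly the two components $C$ and $C'$. Also as in \Lr{Lk}, Menger's theorem gives two independent \pths{x}{y} through $C$ and there is a third through $C'$, so there are three independent \pths{x}{y}. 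Write $x',y'$ for the neighbours of $x,y$ in $C'$.

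\emph{$P$ is two-coloured.} Suppose $P$ carried all of $b,c,d$. Let $\alpha$ be the colour of the edge $xx'$; then $P$ contains an edge $e$ of colour $\alpha$, and we translate $e$ onto $xx'$ by a \auto\ $w$ and put $P':=w(P)$. The case analysis from the proof of \Lr{Lk} showing $P$ to be monochromatic now applies essentially verbatim --- the only feature of the two-generator situation used there is that $P$ contains an edge of the colour of $xx'$, which holds here too --- and in each case it contradicts either the existence of three independent \pths{x}{y} or the minimality of $P$. Hence $P$ carries exactly two colours, say $c$ and $d$. Running the same argument with $yy'$ in place of $xx'$ (now using that $P$ contains both $c$ and $d$) shows that each of $xx',yy'$ is coloured $b$; in particular $x'=xb$ and $y'=yb$.

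\emph{$x^{-1}y$ is an involution.} Put $z:=x^{-1}y$. Since $P$ uses only $c$ and $d$, both $x$ and $y$ lie in the coset $D$ of $\langle c,d\rangle$ that contains $x$, so $z\in\langle c,d\rangle$; moreover $D\subseteq\overline{C}:=C\cup\{x,y\}$, because from every vertex of $\overline{C}$ the $c$- and $d$-edges again lead into $\overline{C}$ (at both $x$ and $y$ they lead into $C$, and a neighbour of a vertex of $C$ lies in $\overline{C}$). In particular $xz^{2}\in\overline{C}$. Let $P_3$ be a shortest \pth{x}{y} through $C'$; it starts with $xx'$ and ends with $y'y$. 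Let $\phi$ be the \auto\ $v\mapsto yx^{-1}v$, so $\phi(x)=y$; since $x'=xb$ and $y'=yb$ we get $\phi(x')=y'$, so $\phi$ carries $xx'$ onto $yy'$, whence $\phi(P_3)$ starts at $y$ with the edge $yy'$ and then runs inside $C'$. Its terminal vertex is $\phi(y)=yx^{-1}y=xz^{2}$, which lies in $\overline{C}$ and so outside $C'$; therefore $\phi(P_3)$ must leave $C'$, and since $xx'$ and $yy'$ are the only edges joining $C'$ to the rest of $G$ and $yy'$ is already used, $\phi(P_3)$ must reach $x$ along $xx'$. Were $x$ not the last vertex of $\phi(P_3)$, the initial segment of $\phi(P_3)$ from $y$ to $x$ would be an \pth{x}{y} through $C'$ strictly shorter than $\phi(P_3)$, hence shorter than $P_3$, contradicting the choice of $P_3$. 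So $\phi(P_3)$ ends at $x$, i.e.\ $xz^{2}=x$, i.e.\ $z^{2}=1$.

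The delicate step is the last one, just as in \Lr{Lk}: one must check carefully that the \auto\ taking $x$ to $y$ carries $xx'$ exactly onto $yy'$ --- which is precisely why it is worth first establishing that both of these edges are coloured $b$ --- and that its value at $y$ returns to $\overline{C}$, so that the minimality of $P_3$ can be brought to bear. A lesser point is to confirm that the case analysis borrowed from \Lr{Lk} in the two-colour step survives the passage from two to three colours; it does, since the only property of $P$ it uses is that $P$ contains an edge of the colour of $xx'$.
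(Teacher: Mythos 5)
Your proposal is correct and follows essentially the same route as the paper: it establishes the four-edges/two-components structure and the three independent \pths{x}{y} as in \Lr{Lk}, rules out any edge of $P$ sharing a colour with $xx'$ or $yy'$ by the translation argument of \eqref{Pmono}, and then proves $z^2=1$ by translating a shortest \pth{x}{y} through $C'$ under the \auto\ taking $x$ to $y$ and invoking minimality — exactly the argument the paper imports from the final part of \Lr{Lk} (phrased there as a contradiction to \eqref{foure2}, by you directly). Your write-up merely fills in the details the paper leaves to the reader.
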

% *** ---- *** 
\begin{proof}
By the same arguments as in the proof of assertion \eqref{foure} of the proof of \Lr{Lk} we can prove that
\labtequ{foure2}{The component $C$ of $G - \{x,y\}$ containing $P$ sends four edges to $\{x,y\}$.}
Let $x',y'$ be the neighbours of $x,y$ respectively in the other component $C'$ of $G - \{x,y\}$. We claim that none of the edges $xx', yy'$ has the same colour as some edge $e$ of $P$. For otherwise we can, similarly to the proof of \eqref{Pmono} in \Lr{Lk}, translate $e$ to that edge to obtain a translate of $P$ that crosses the separator $\{x,y\}$, which easily yields a contradiction to the choice of $P$. This, combined with the fact that $||P||>1$ since \g has no \sepe, implies that 
\labtequ{xxyy}{$P$ is two-coloured, with the colours $b,c$ say, and both edges  $xx', yy'$ bear the third colour $d$.}

Using this it is now easy to prove that $z=x^{-1}y$ is an involution. For if not then we obtain the situation of \fig{xyz}, where $P_1$ is a shortest \pth{x}{y}\ in $G - C$ and $P_2$ its translate by $z$. By a similar argument to that of the final part of the proof of \Lr{Lk} we can now prove that $vv'$ is also incident with $C'$, contradicting \eqref{foure2}.

\showFig{xyz}{The hypothetical situation when $x^{-1}y$ is not an involution.}
\end{proof}

\subsubsection{Graphs with no \sepe s and no 2-coloured cycles}

We will distinguish two cases according to whether \g has a cycle containing only two of the three colours $b,c,d$ or not.

\begin{lemma}\label{LkiiiA}	
Let $G= Cay\left<b,c,d\mid b^2, c^2,d^2, \ldots \right>$ be a Cayley graph with $\kappa(G)=2$ and no 2-coloured cycle and no \sepe. Then $G$ is planar and has an embedding in which all edges preserve spin.
\end{lemma}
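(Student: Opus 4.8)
The plan is to show that such a graph $G$, although defined only by the property that its connectivity is $2$, automatically admits a consistent planar embedding, and the strategy mirrors the previous proofs: first extract structural information from a shortest separating path, then use it to bound the possible relators, and finally invoke \Cr{Macay}. First I would take a shortest path $P$ with endvertices $x,y$ separating $G$. By \Lr{Lk2}, $P$ is two-coloured, say in colours $b,c$, with $x^{-1}y$ an involution, and both edges $xx',yy'$ leaving the separated component $C$ in the opposite direction bear the third colour $d$. Since $G$ has no $2$-coloured cycle, $P$ cannot close up into a $bc$-cycle; I would analyse what the edges at $x$ and $y$ inside $C$ look like, using \eqref{foure2} (four edges from $C$ to $\{x,y\}$), to argue that the two edges of $C$ incident with $x$ (resp.\ $y$) form the continuation of $P$ on one side and a $d$-edge (or the third colour) on the other --- i.e.\ that the ``crossing'' structure is forced. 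The goal of this first phase is to establish that once a relator of $\Gamma$ contains a certain short subword, the next letter is forced, exactly as in the proofs of \eqref{noaba} and of Case~I/II of \Tr{Tkii}.

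The second phase is to convert this local rigidity into a presentation. I would start from an arbitrary \red\ presentation $\langle b,c,d\mid R_1,R_2,\ldots\rangle$ of $\Gamma$ and, for each ``bad'' subword that the shortest-path analysis rules out, use the separating pair $\{x,y\}$ (more precisely the $d$-edge or $b$-edge playing the role of the \sepe-like edge in \fig{bcut}) to shortcut any closed walk $W_i$ induced by $R_i$ into two shorter closed walks whose concatenation is equivalent to $R_i$, just as in \fig{W} and in \Tr{Tkii}. Iterating, I would obtain a \red\ presentation in which no relator contains the forbidden subwords; the combinatorics of which two-letter or three-letter words survive will then force each nontrivial $R'_i$ to have a very restricted shape. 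This should match one of the presentations appearing in \Tr{Tmain} (the relevant cases here being \ref{iv} and \ref{vii}, i.e.\ $\langle b,c,d\mid b^2,c^2,d^2,(bc)^2,(bcd)^m\rangle$ or $\langle b,c,d\mid b^2,c^2,d^2,(b(cb)^nd)^m\rangle$); the hypotheses ``no \sepe'' and ``no $2$-coloured cycle'' are exactly what rules out the other $3$-generator types. As in \Tr{Tkii}, I would also check that no parameter can degenerate to a value that makes some colour a bridge, using \Lr{relcc} and $2$-connectedness.

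The third phase is the embedding. Once the presentation is pinned down, I would verify that the set $B$ of cycles of $G$ induced by the surviving relators is a $2$-basis: no edge lies in more than two of these circuits. This is a finite check on each candidate presentation --- one counts, for each generator, in how many relator-cycles a fixed edge of that colour occurs, using that the relators are \red\ so the induced closed walks are cycles. Then \Cr{Macay} gives a \vapf\ planar embedding $\sigma$ whose facial cycles are exactly the members of $B$. Finally, since the elements of $B$ are precisely the finite face boundaries and the group acts transitively on vertices carrying faces to faces, a direct inspection of the cyclic order of the three edges at a vertex --- comparing a vertex to an adjacent one across each colour --- shows that every colour is \pr, so $\sigma$ is consistent; in fact one reads off that all edges preserve spin, as claimed.

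The main obstacle I expect is the second phase: proving that the shortcutting procedure can always be carried out and that it genuinely terminates with \emph{all} forbidden subwords eliminated, while keeping the presentation \red. The delicate point is that after shortcutting along one edge the two new relators might reintroduce a forbidden pattern elsewhere, so one needs a monovariant (e.g.\ total number of occurrences of the forbidden subwords, or total length) that strictly decreases, together with a careful case analysis --- using \fig{bcut}-type pictures --- showing that the closed walk really does revisit the relevant vertex whenever the forbidden subword is present, so that the shortcut is available. Getting the list of forbidden subwords exactly right, so that what remains is precisely the two presentations above and nothing more, is where most of the work lies; the planarity and consistency of the embedding are then comparatively routine given \Cr{Macay}.
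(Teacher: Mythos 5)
Your overall strategy (derive a presentation combinatorially by the shortcutting technique, then apply \Cr{Macay} to get a planar embedding and read off the spins) cannot work here, and the failure is concrete, not just a matter of missing details. The graphs covered by this lemma are exactly those of type \ref{vii}, with presentation $\left<b,c,d\mid b^2,c^2,d^2,(b(cb)^nd)^m\right>$, $n,m\geq 2$, and for this presentation the circuits induced by the relators do \emph{not} form a 2-basis: a fixed $c$-edge lies on $n$ distinct cycles induced by $(b(cb)^nd)^m$ and a fixed $b$-edge on $n+1$ of them (count the rotations of the cyclic word beginning with the given letter), so for every admissible $n\geq 2$ some edge lies on at least three relator circuits. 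Hence the hypothesis of \Cr{Macay} is violated; worse, these graphs admit no \vapf\ embedding at all (\Cr{cvap}), so no argument routed through \Cr{Macay}, which always produces a \vapf\ embedding, can ever deliver the embedding the lemma asserts. Your phase~2 is also shakier than you suggest: the shortcutting arguments of Theorems~\ref{Tki} and \ref{Tkii} lean on the existence of a \sepe\ (a single edge joining the two separating vertices, used to split a closed walk into two genuinely shorter relators), and the present hypotheses exclude \sepe s; the separator here is joined only by a path of odd length at least $3$, and cutting along such a path does not obviously preserve simplicity or give a decreasing monovariant.

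The paper's route is the reverse of yours and avoids both problems: it does not touch presentations at this stage at all. Using \Lr{Lk2} and \eqref{xxyy} it shows that contracting everything outside one $bc$ double ray $D_0$ leaves $D_0$ together with disjoint $d$-coloured paths of length two joining each $x$ to $xz$, embeds this spiral-like graph in the sphere (\fig{spiral}), and then builds the embedding of $G$ as a limit of sphere surgeries: at each step one contracted double ray is decontracted and a copy of the spiral embedding is pasted in along a disc, flipping the pasted disc if necessary so that all vertices keep the same spin. The resulting embedding is deliberately \emph{not} \vapf, which is exactly what your approach cannot produce. The presentation is only obtained afterwards, in \Cr{TkiiiA}, by feeding this all-spin-preserving embedding into the cited result \Lr{TIIc} from \cite{cayley3} --- so the embedding is an input to the presentation, not the other way around. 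If you want to salvage your plan, you would have to prove the presentation without any embedding and then construct a non-\vapf\ embedding by hand anyway, which is essentially the paper's construction.
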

% *** ---- *** 
\begin{proof}
Let $P$ be a shortest path whose endvertices $x,y$ separate \G. \Lr{Lk2} yields that $P$ is two-coloured and $z:=x^{-1}y$ is an involution. Since $z$ is an involution and \g has no 2-coloured cycle, $||P||$ must be odd. Assume \obda\ that the labels appearing in $P$ are $b$ and $c$ and that $z=b(cb)^n$. 

We are now going to construct an explicit embedding \sig\ of \G\ in the sphere. We will construct \sig\ inductively, in \oo\ steps.  % in which all vertices have the same spin.
For this, let $D_0, D_1, D_2, \ldots$ be a ---possibly finite--- enumeration of the double-rays of \g spanned by $b$ and $c$. Let $G_0$ be the graph obtained from \g after contracting all edges of \g not incident to or contained in $D_0$. We claim that $G_0$ consists of the double-ray $D_0$ and an infinite set $\cp$ of pairwise disjoint paths of length two with all edges labelled $d$, each such path joining a vertex $x$ of $D_0$ to the vertex $xw$ (\fig{spiral}). Indeed, to begin with, $D_0$ is a subgraph of $G_0$. Moreover, for every vertex $x$ of $D_0$ we know that  $x,xz$ separate \G, and hence also $G_0$. Furthermore, by \eqref{xxyy} one of the components of $G - \{x,y\}$ contains $D_0$ and the other component $C'$ is incident with the two $d$ edges incident with $x$ and $xz$. Since $C'$ does not send any other edge to $D_0$, it is by the definition of $G_0$ contracted into a single vertex. This proves our claim. \fig{spiral} shows an embedding $\sig_0$ of $G_0$. For reasons that will become clear soon, consider $\sig_0$ to be an embedding in the sphere $S$ rather than in the plane. Note that all vertices of $D_0$ have the same spin in $\sig_0$. 

\epsfxsize=\hsize
\showFig{spiral}{Constructing a \prem\ in the absence of 2-coloured cycles.}

This was the first step of our inductive construction of \sig. We now proceed with the remaining steps, in each step $i$ decontracting the double-ray $D_i$ and its incident edges and extending $\sig_{i-1}$ into an embedding $\sig_i$ that includes $D_i$. 

More formally, for $i=1,2,\ldots$, let $G_i$ be the graph obtained from \g by contracting all edges that are not incident to or contained in one of the double-rays $D_0, D_1, \ldots D_i$. Consider the embedding $\sig_{i-1}$ of $G_{i-1}$ in the sphere $S$ inherited from the previous step. Let $x_i$ be the (unique) vertex of $G_{i-1}$ into which $D_i$ was contracted. Note that $x_i$ was the middle vertex of some of the   $d$-labelled paths of length two of $G_{i-1}$. Pick a closed disc $X$ in $S$ into which $\sig_{i-1}$ maps $x_i$ and part of its incident $d$-edges but no other vertex or edge. Then, consider the auxiliary graph $G'_i$, isomorphic to $G_0$, which is obtained from $G$ just like $G_0$ except that we keep $D_i$ instead of $D_0$ and contract all other double-rays. Moreover, let $\sig'_i$ be an embedding of $G'_i$ in another copy $S'$ of the sphere similar to the embedding $\sig_0$ of $G_0$.  Pick a closed disc $X'$ in $S'$ into which $\sig_{i-1}$ maps the (unique) vertex of $G'_i$ into which $D_0$ was contracted, and into which $\sig_{i-1}$ maps no other vertex. To obtain the new embedding $\sig_i$, cut both discs $X,X'$ out of their corresponding spheres $S,S'$, and glue the remainders together along their boundaries to obtain a new copy of the sphere $S''$ in which now $G_i$ is embedded (as a combination of $\sig_{i-1}$ and $\sig'_i$). Indeed, note that the boundary of each of the discs  $X,X'$ is crossed by precisely two edges, and these are the same $d$ edges $e,f$ of \g. Thus we may perform the glueing in such a way that these edges are properly embedded as arcs in $S''$, half of each edge coming from $S$ and the other half coming from $S'$. 

We would like to obtain an embedding \sig\ of \g as a limit of the embeddings $\sig_i$. To achieve this, it is more convenient to think of the embedding $\sig_i$ we just constructed as an embedding in $S$ itself rather than some new copy $S''$. Thus, formally, we define $\sig_i: \g \to S$ so that it coincides  with $\sig_{i-1}$ in $S \sm X$ and `imitates'  $\sig'_i \restr (S' \sm X')$ in $X$. We can now define the embedding \sig\ of $G$ on $S$ as the limit of the $\sig_i$: every point $p$ of \g is mapped to the unique point $s$ of $S$ \st\ $\sig_i(p)=s$ holds for all but finitely many $i$.

Next, we claim that \sig\ can be constructed so that every vertex of $G$ has the same spin.
For this, suppose that every vertex of $G_{i-1}$ has the same spin $\varsigma$ in $\sig_{i-1}$. Then every vertex of $G_{i-1}$ also has that spin in $\sig_{i}$ by construction. Moreover, all vertices of $D_{i}$ share the same spin $\varsigma'$ in $\sig_i$, as this was the case in $\sig'_i$. We may assume that $\varsigma=\varsigma'$, since if this is not the case, then we could have flipped the disc $S'\sm X'$ we replaced $X$ with around
%turned $S'$ inside out 
before  performing the above cutting and glueing operation, to reverse the spins of the vertices of $D_i$ (when flipping that disc we fix the two points on its boundary that meet edges of \G). Since there are only two possible types of spin in a cubic graph, we would have then achieved $\varsigma=\varsigma'$. Thus, assuming that every vertex of $G_{i-1}$ has the same spin in $\sig_{i-1}$ we deduced that every vertex of $G_{i}$ has the same spin in $\sig_{i}$. Since this is the case at the beginning of our construction when $i=0$, we obtain by induction that every vertex of $G$ has the same spin is \sig.
\end{proof}

We can now plug the embedding \sig\ we just constructed into the following result from \cite{cayley3} to obtain a presentation of the corresponding graphs.

\begin{lemma}[\citeTIIc] \label{TIIc}
Let $G= Cay\left<b,c,d\mid b^2, c^2,d^2, \ldots \right>$ be a Cayley graph with $\kappa(G)=2$, with no 2-coloured cycle, having an embedding in which all edges preserve spin. Then $G \isom Cay \left<b,c,d\mid b^2, c^2,d^2, (b(cb)^nd)^m\right>, n\geq 1, m\geq 2$.

Conversely, each of the above presentations, with parameters chosen in the specified domains, yields a planar cubic \Cg\ of connectivity 2 with no two-coloured cycle.
\end{lemma}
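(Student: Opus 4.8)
The plan is to handle the two directions separately, building on the preceding lemmas.

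\emph{The forward direction.} Suppose \g is as in the statement and fix a \prem\ \sig\ of \g, which exists by \Lr{LkiiiA}; so every vertex of \g has the same spin. Let $P$ be a shortest path whose endvertices $x,y$ separate \G. By \Lr{Lk2}, $P$ is two-coloured and $z:=x^{-1}y$ is an involution; \obda\ the colours of $P$ are $b,c$, so $z$ is an alternating word in $b,c$ of length $||P||$. Since \g has no \sepe\ we have $||P||>1$; and $||P||$ cannot be even, since an alternating $\{b,c\}$-word of even length representing an involution would force $bc$ to have finite order and hence produce a 2-coloured cycle. Therefore $||P||=2n+1$ for some $n\ge 1$, and after possibly swapping the names of $b$ and $c$ we may assume $z=b(cb)^n$. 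The aim is now to show that \sig\ can be taken so that its facial walks are cycles inducing the single relator $R:=(b(cb)^nd)^m$ for one well-defined $m\ge 2$, and then to invoke \Lr{relcc}.

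\emph{The structural heart.} The point is that \eqref{xxyy}, applied to every translate of $P$, describes \g completely. For every \dray\ $D$ of \g whose edges alternate in colour between $b$ and $c$, and every vertex $v$ of $D$, the pair $\{v,vz\}$ separates \G; by \eqref{xxyy} the component not containing $D$ (a ``bubble'' $B_v$) meets $\{v,vz\}$ only in the two $d$-edges incident with $v$ and with $vz$, and $vz$ lies $2n+1$ steps along $D$ from $v$. Applying the same description recursively inside each $B_v$ --- which again contains such a \dray\ --- exhibits \g as a tree-like arrangement of $\{b,c\}$-coloured \dray s glued to one another along single $d$-edges, with consecutive attachment points on each \dray\ exactly $2n+1$ apart. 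Since every vertex has the same spin, the sides of $D$ on which the bubbles $B_v$ sit alternate in a forced pattern as one traverses $D$, which determines \sig\ up to one cyclic parameter. Reading \sig\ off this picture, each face of \sig\ is bounded by a finite cycle whose facial walk consists of blocks of the form ``an arc of length $2n+1$ along a $\{b,c\}$-coloured \dray, followed by one $d$-edge''; using that $b(cb)^n$ is its own inverse, such a walk induces exactly $R=(b(cb)^nd)^m$, where $m$ is the number of blocks. Here $m\ge 2$ because a single block is not a relation, and the genuinely delicate point is to show that $m$ is finite --- that is, that \sig\ has no infinite face --- which one does using that \g is vertex-transitive and \iicon, so that an infinite face would propagate to a configuration incompatible with \eqref{xxyy}.

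\emph{Finishing the forward direction and the converse.} The faces of \sig\ are then precisely the translates of the $R$-cycle, and the tree structure above shows that every finite cycle of \g is a $\Z_2$-sum of finitely many of them, so these circuits generate \ccfg; by \Lr{relcc} we conclude $\Gam(G)\isom \left<b,c,d\mid b^2,c^2,d^2,(b(cb)^nd)^m\right>$ with $n\ge 1$, $m\ge 2$, and \sig\ is the asserted embedding in which all edges preserve spin. For the converse, fix $n\ge 1$, $m\ge 2$, put $G:=Cay\left<b,c,d\mid b^2,c^2,d^2,(b(cb)^nd)^m\right>$, and first check that this presentation is \red\ (no proper subword of $(b(cb)^nd)^m$ represents the identity --- using, say, homomorphisms onto small groups to kill the relevant subwords). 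Inspecting how two translates of $R:=(b(cb)^nd)^m$ can overlap shows that every $\{b,c\}$-edge and every $d$-edge of \g lies in exactly two $R$-circuits, so \Cr{Macay} applies and yields that \g is planar with a \vapf\ embedding whose facial cycles are exactly the $R$-cycles, an embedding in which all edges preserve spin. Finally one verifies directly that $\kappa(G)=2$ --- the endvertices of a $b(cb)^n$-subpath of an $R$-cycle form a $2$-element separator, while \g, being a connected vertex-transitive graph on more than two vertices, is \iicon\ --- and that \g has no 2-coloured cycle, since its $\{b,c\}$-, $\{b,d\}$- and $\{c,d\}$-subgraphs are disjoint unions of \dray s. The main obstacle throughout is, as flagged, pinning down the global facial structure of \sig\ in the forward direction, the natural embedding being of ``spiral'' type rather than \vapf.
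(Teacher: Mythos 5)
First, a point of comparison: the paper does not prove this lemma at all --- it is imported verbatim from \citeTIIc\ (that is what the citation tag on the lemma indicates), and within this paper it is only ever applied under the extra hypothesis ``no \sepe''. So your attempt cannot be measured against an internal proof; judged on its own it contains two genuine gaps. The clearest one is in the converse direction. Your application of \Cr{Macay} rests on the claim that every edge of $G$ lies in exactly two circuits induced by $R=(b(cb)^nd)^m$. This is false for $n\ge 2$: the cyclic word $(b(cb)^nd)^m$ has rotational symmetry of order exactly $m$ (the $d$'s must go to $d$'s), so a fixed $b$-edge can occupy any of the $n+1$ $b$-positions of a block $b(cb)^n$, and each such block extends to a unique $R$-cycle; hence every $b$-edge lies in $n+1\ge 3$ relator-induced circuits (and every $c$-edge in $n$). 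So \Cr{Macay} is not applicable, and its conclusion --- a \vapf\ embedding with the $R$-cycles as faces --- would in fact contradict \Cr{cvap}, which says graphs of type \ref{vii} of \Tr{Tmain} admit no \vapf\ embedding; it also contradicts your own closing remark that the natural embedding is of spiral type. The converse has to be established by an explicit construction (a tree-like arrangement of $bc$-\dray s glued along $d$-edges, in the spirit of \fig{spiral} and the converse part of \Tr{Tki}), not via MacLane.

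The forward direction has two problems of the same kind. You invoke \Lr{Lk2} and assert $||P||>1$ ``since \g\ has no \sepe'', but ``no \sepe'' is not a hypothesis of the lemma; the case $n=1$ of the conclusion is realised exactly by graphs that do have \sepe s (see the remark in the proof of \Cr{TkiiiA}), so that case is simply outside your argument. More seriously, the claim that ``the faces of \sig\ are precisely the translates of the $R$-cycle'' is impossible for $n\ge 2$ by the same counting (each $b$-edge would then lie on $n+1\ge 3$ face boundaries); the embeddings arising here have infinite face boundaries (compare the remark after Case~2 in the proof of \Tr{TkiiiB}, and again \Cr{cvap}). Consequently the step you yourself flag as delicate --- producing a finite $m$ with $(b(cb)^nd)^m=1$, i.e.\ finding the second relator at all so that \Lr{relcc} can be applied --- is not carried out: ``an infinite face would propagate to a configuration incompatible with \eqref{xxyy}'' is a hope, not an argument, and it is precisely the content of the cited Theorem~11.10 of \cite{cayley3}.
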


\begin{corollary}\label{TkiiiA}	
Let $G= Cay\left<b,c,d\mid b^2, c^2,d^2, \ldots \right>$ be a Cayley graph with $\kappa(G)=2$ and no 2-coloured cycle and no \sepe. Then \\ $G\isom Cay \left<b,c,d\mid b^2, c^2,d^2, (b(cb)^nd)^m\right>, n,m\geq 2$. 

Conversely, each of the above presentations, with parameters chosen in the specified domains, yields a planar cubic \Cg\ of connectivity 2 with no \sepe\ and no two-coloured cycle.
\end{corollary}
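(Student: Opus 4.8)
The plan is to deduce \Cr{TkiiiA} directly by combining \Lr{LkiiiA} with \Lr{TIIc}. First I would invoke \Lr{LkiiiA}: since $G$ is a cubic \Cg\ of connectivity $2$ with no $2$-coloured cycle and no \sepe, the hypotheses of \Lr{LkiiiA} are met, so $G$ is planar and admits an embedding in which every edge preserves spin. That embedding is exactly the extra datum needed to feed into \Lr{TIIc}.

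Second, I would apply \Lr{TIIc} with this embedding. Its hypotheses are precisely: $\kappa(G)=2$, no $2$-coloured cycle, and the existence of a spin-preserving embedding; all three now hold. Hence \Lr{TIIc} yields $G\isom Cay\left<b,c,d\mid b^2,c^2,d^2,(b(cb)^nd)^m\right>$ for some $n\geq 1$, $m\geq 2$. It remains to improve the bound $n\geq 1$ to $n\geq 2$. The point is that $n=1$ gives the relator $(bcbd)^m$, and the resulting graph does have a \sepe: every $b$ edge is a \sepe, since the relators $b^2,c^2,d^2,(bcbd)^m$ are exactly those describing the second alternative of \Tr{Tkii} (Case II). So if $n=1$ then $G$ would have a \sepe, contradicting our hypothesis. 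Therefore $n\geq 2$, which gives the claimed presentation.

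For the converse, let $n,m\geq 2$ and $G\isom Cay\left<b,c,d\mid b^2,c^2,d^2,(b(cb)^nd)^m\right>$. By the converse part of \Lr{TIIc} (or equivalently by \Cr{Macay}, since the relators $b^2,c^2,d^2$ contribute only the doubled involution edges and each edge appears in at most two of the circuits induced by $(b(cb)^nd)^m$) we get that $G$ is a planar cubic \Cg\ of connectivity $2$ with no $2$-coloured cycle. The only thing to add is that $G$ has no \sepe\ when $n\geq 2$: were some edge $e=xy$ a \sepe, the analysis of \Tr{Tkii} would force the presentation of $\Gam(G)$ to be one of the two listed there, neither of which is a presentation of this group for $n\geq 2$ (for instance, the $bcbd$-type relator corresponds exactly to $n=1$); alternatively, one checks directly from the explicit embedding of \Lr{LkiiiA} that removing the endvertices of any edge leaves the graph connected.

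The main obstacle is the sharpening $n\geq 1 \rightsquigarrow n\geq 2$ and, symmetrically, the ``no \sepe'' clause in the converse: one must be careful that $n=1$ is genuinely the \sepe\ case and is the \emph{only} overlap between the no-\sepe\ family and the \sepe\ family, so that excluding \sepe s is equivalent to excluding $n=1$. This is really a bookkeeping check against \Tr{Tkii} rather than new mathematics, but it is where the argument could go wrong if the families were not in fact disjoint as claimed in \Tr{Tmain}. Everything else is a direct citation of \Lr{LkiiiA} and \Lr{TIIc}.
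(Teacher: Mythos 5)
Your proposal is correct and follows essentially the same route as the paper: combine \Lr{LkiiiA} with \Lr{TIIc} to get the presentation with $n\geq 1$, then use the no-\sepe\ hypothesis to exclude $n=1$, and settle the converse via the converse of \Lr{TIIc} together with a check that no edge is a \sepe\ when $n\geq 2$. The only cosmetic difference is that you rule out $n=1$ by invoking the converse of \Tr{Tkii} (the $(bcbd)^m$ graphs have \sepe s), whereas the paper argues directly from its explicit embedding that for $n=1$ every $b$ edge is a \sepe; both are sound, and your fallback direct check for $n\geq 2$ matches the paper's.
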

\begin{proof}
 Combining \Lr{LkiiiA} with \Tr{TIIc} we obtain a presentation of the desired form, except that \Tr{TIIc} allows $n=1$ as a possibility. However, if $n=1$ then any $b$ edge would be a \sepe\ as can be easily seen with the help of \fig{spiral}: if $n=1$, in which case $|z|=3$, then no path connects the two parts of a $bc$-double-ray obtained after removing one of the $b$ edges and its endvertices. This contradicts our assumption that \g has no \sepe. On the other hand, if $n>1$ then it is easy to check that no edge is a \sepe.
\end{proof}

\comment{
	In our next result we are going to need the following result from \cite{cayley3} which, similarly to \Lr{TIIc}, provides information about the structure of \Gam\ once we know some embedding of \G.
	\note{not needed:}
	\begin{lemma}[{\cite[...]{cayley3}}] \label{TIIa2}
 Let $G= Cay\left<b,c,d\mid b^2, c^2, d^2, (bc)^n,\ldots \right>$ be a Cayley graph with $\kappa(G)=2$ and an embedding in which all edges preserve spin. Then $\Gam \isom \left< b^2, c^2, d^2 \mid bcbc, (bcd)^m\right>$.
\end{lemma}
	\begin{lemma}[{\cite[...]{cayley3}}] \label{TIId1}
 	Let $G= Cay\left<b,c,d\mid b^2, c^2, d^2, (bc)^n, \ldots \right>$ be a Cayley graph with $\kappa(G)=2$ and a \prem\ in which only $b$ preserves spin. Then $G\isom Cay \left<b,c,d\mid b^2, c^2, d^2, (bc)^n, (cbcd)^m\right>$ with $n=2k, k\geq 2$ and $m\geq 2$. Conversely, every such presentation yields a Cayley graph with $\kappa(G)=2$ and a \prem\ in which only $b$ preserves spin.
	\end{lemma}
}

\subsubsection{Graphs  having 2-coloured cycles and no \sepe s}

We can now proceed with the last result of this section, completing our characterisation of the planar cubic Cayley graphs of connectivity 2. The only remaining case is when \g has no \sepe\ but does have a two-coloured cycle. 

To begin with, let us first take the finite ones out of the way: by \cite[Chapter 27, Theorem 3.7.]{BaAut} finite simple \Cg s of degree at least 3 are \tcon, so in our case $G$ has to have parallel edges in order to have connectivity 2. It follows that if \g is finite then 
$$\Gam\isom \left<b,c,d\mid b^2, c^2, d^2, (bc)^n, cd\right>, n\geq 1.$$

From now on we can assume \g to be infinite.

\begin{theorem}\label{TkiiiB}	
Let $G= Cay\left<b,c,d\mid b^2, c^2, d^2, \ldots \right>$ be a planar Cayley graph with $\kappa(G)=2$, with a two-coloured cycle, and no \sepe. Then either $\Gam \isom \left< b,c,d| b^2, c^2, d^2, (bc)^2, (bcd)^m\right>$ with $m\geq 2$, in which case $G$ has a \prem\ with a single spin, or $\Gam\isom \left<b,c,d\mid b^2, c^2, d^2, (bc)^{2k}, (cbcd)^m\right>$, $k,m\geq 2$ and $G$ has a \prem\ in which only $c$ preserves spin. %IIa2 or IId1.

Conversely, each presentation as above, with parameters in the specified domains, yields a planar \Cg\ of connectivity 2 without \sepe s.
\end{theorem}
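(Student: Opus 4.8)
The plan is to follow the strategy of \Lr{LkiiiA}/\Cr{TkiiiA}: exhibit, for each such \G, an embedding \sig\ with a prescribed spin pattern, then feed \sig\ into the corresponding structural result of \cite{cayley3} to read off the presentation. After relabelling we may assume the two-coloured cycle uses the colours $b,c$; since \g is vertex-transitive the subgraph spanned by $b,c$ is a disjoint union of cycles of a common even length $2n$, so $(bc)^n=1$ with $n\geq 2$ least, every vertex lies on a unique $bc$-cycle and is incident with a unique $d$-edge, and by the paragraph preceding the theorem \g may be taken infinite. A preliminary step is to rule out $d$-chords --- no $d$-edge joins two vertices of one $bc$-cycle. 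Indeed, vertex-transitivity would otherwise place such a chord, in a translation-invariant position, on every $bc$-cycle, and as $d$ is an involution the position must be antipodal; but then \g contains a subdivision of the cycle on $2n$ vertices together with all its ``long diagonals'', which is $K_4$ for $n=2$, equals $K_{3,3}$ for $n=3$, and contains a subdivided $K_{3,3}$ for $n\geq 4$, contradicting $\kappa(G)=2$ or planarity.

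Next I would take a shortest path $P$ whose endvertices $x,y$ separate \G; by \Lr{Lk2} it is two-coloured, $z:=x^{-1}y$ is an involution, and $||P||>1$ since \g has no \sepe. As in the proof of \Lr{Lk2}, the component $C$ of $G-\{x,y\}$ meeting $P$ receives four edges from $\{x,y\}$, and the two remaining ones (which bear the third colour) enter a common component $C'$, else one of $x,y$ is a cut-vertex. Using the $d$-chord-freeness and the presence of a $bc$-cycle one then shows that the third colour is $d$, i.e.\ that $P$ runs along a $bc$-cycle $Z$, so that $z$ is an involution of the dihedral group $\langle b,c\rangle$; these are the $n$ reflections together with, if $n$ is even, the central element $(bc)^{n/2}$. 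This yields the dichotomy \textbf{(A)} $z=(bc)^{n/2}$, so $n$ is even, $||P||=n$ and $x,y$ are antipodal on $Z$; or \textbf{(B)} $z$ is a reflection, so $||P||$ is odd.

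In each case one uses $\kappa(G)=2$ and the absence of a \sepe\ to describe \g as a tree-like arrangement of $bc$-cycles joined pairwise through their incident $d$-edges, the separating pairs being exactly the translates of $\{x,y\}$, and then constructs \sig\ inductively in $\omega$ steps precisely as in \Lr{LkiiiA}: enumerate the $bc$-cycles $D_0,D_1,\dots$, contract all of \g but $D_0$ and its incident $d$-edges, embed the result in the sphere, and decontract the $D_i$ one at a time by cutting a small disc around the vertex into which $D_i$ had been contracted and glueing in a copy of the ``$D_i$-gadget'', flipping it if necessary so that spins agree (there are only two spins in a cubic graph). One arranges the glueing so that in case (A) every edge preserves spin, and in case (B) exactly one colour --- which after relabelling we call $c$ --- preserves spin. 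Plugging \sig\ into the relevant result of \cite{cayley3} (the counterpart of \Lr{TIIc} for graphs carrying a two-coloured cycle) then gives $\Gamma\isom\left<b,c,d\mid b^2,c^2,d^2,(bc)^2,(bcd)^m\right>$ with $m\geq 2$ in case (A), and $\Gamma\isom\left<b,c,d\mid b^2,c^2,d^2,(bc)^{2k},(cbcd)^m\right>$ with $k,m\geq 2$ in case (B); the bounds come from that result together with \g being infinite, and the two presentations never yield isomorphic graphs since $bc$ has order $2$ in the first but order $\geq 4$ in the second. For the converse, given either presentation I would apply \Cr{Macay} to the family of cycles induced by the relators --- the $4$-cycles and the $3m$-cycles in the first case, the $4k$-cycles and the $4m$-cycles in the second, each edge lying on at most two of them --- obtaining a planar embedding with exactly these facial cycles, and then read off from this face structure that $\kappa(G)=2$, that \g has no \sepe, and that the spins behave as claimed.

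The short parts are the exclusion of $d$-chords, the dihedral dichotomy, and the converse. The genuine obstacle is the third paragraph: one must check that in cases (A) and (B) the contractions really do reduce \g to a ``tree of $bc$-cycles'' (this is where $\kappa(G)=2$ and the absence of a \sepe\ are used in full), that the iterated disc surgery converges to a genuine embedding, and --- the crux --- that the flippings can be performed consistently so as to realise the all-spin-preserving pattern in case (A) and the single-preserved-colour pattern in case (B), rather than being trapped in some inconsistent pattern to which no \cite{cayley3} result applies.
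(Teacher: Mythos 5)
There is a genuine gap in the forward direction: your dichotomy (A)/(B) never invokes the hypothesis that \g is planar, and that hypothesis is exactly what rules out the cases your plan leaves open. In case (A) you allow $z=(bc)^{n/2}$ for any even $n$, i.e.\ $\|P\|=n$ arbitrary, and in case (B) you allow any reflection, i.e.\ any odd $\|P\|\geq 3$ and any $bc$-cycle length. The paper's proof spends its effort precisely here: it shows that $\|P\|>3$ is impossible because a $bc$-cycle together with three translates of $P$ with endpoints on it forces a subdivided $K_{3,3}$, and that in the $\|P\|=3$ case the $bc$-cycle length must be divisible by $4$ (for half-length odd, e.g.\ length $6$, the contracted graph $G_0$ is already a subdivided $K_{3,3}$). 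Only then do the presentations $(bc)^2,(bcd)^m$ and $(bc)^{2k},(cbcd)^m$ follow. Without these exclusions your argument cannot close: the omitted parameter values are realised by genuine non-planar cubic Cayley graphs of connectivity $2$ with two-coloured cycles and no \sepe\ (these are the graphs of \Cr{cornp}(ii)), so the step you defer as the ``crux'' --- the tree-of-$bc$-cycles structure and the consistent flipping in the surgery construction --- is not a routine verification but is actually \emph{false} in those cases (the surgery, which as in \Lr{LkiiiA} would prove planarity, cannot succeed for a non-planar graph). You must either exclude them first using planarity, as the paper does with explicit Kuratowski subgraphs, or find where your construction breaks down and turn that into an argument; as written the proposal contains neither. (Your reliance on an unnamed counterpart of \Lr{TIIc} from \cite{cayley3} is a legitimate alternative to the paper's route, which instead derives the presentation directly: in Case 1 from the explicit embedding, in Case 2 via \Lr{relcc} by showing that the $(cbcd)^m$-face circuits together with the $bc$-cycle circuits generate $\cc_f(G)$.)

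The converse also fails as proposed. Your claim that each edge lies on at most two relator-induced cycles is wrong: since all generators are involutions, an edge can be traversed in either direction inside a walk induced by $(bcd)^m$ (resp.\ $(cbcd)^m$), and the two directions yield two \emph{distinct} induced cycles; a $b$- or $c$-edge therefore lies on these two cycles plus its two-coloured cycle, i.e.\ on three relator-induced circuits. So \Cr{Macay} does not apply --- and it cannot, since it would produce a \vapf\ embedding while the graphs of types \ref{iv} and \ref{v} admit none (\Cr{cvap}); consistently, the embeddings the paper builds have every $bc$-cycle non-facial (see the Observation following \Tr{TkiiiB}). The paper instead proves the converse by the explicit constructions of \figs{C4ii} and \ref{nospiral2}, from which connectivity $2$, the absence of \sepe s and the spin behaviour are read off directly; the fact that these graphs have the stated presentations then comes from the (already proved) forward implication.
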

% *** ---- *** 
\begin{proof}
Let again $P$ be a shortest path whose endvertices $x,y$ separate \G. \Lr{Lk2} yields that $P$ is two-coloured and $z:=x^{-1}y$ is an involution. %We distinguish two cases: either the two colours, $b,c$ say, appearing in $w$ span double-rays in \G, or they span cycles. 
We claim that 
\labtequ{bcfin}{the two colours, $b,c$ say, appearing in $z$ span finite cycles.}
Indeed, if $b,c$ span double rays, then it is (at least) one of the other two pairs of colours that span two-coloured cycles. In this case, assume \obda\ that $b,d$ span cycles. We claim that $x,y$ lie on distinct $bd$ cycles. 

Suppose to the contrary that there is a $b,d$ cycle $C$ containing both $x,y$. Then $x,y$ separate $C$ into two subarcs $P_1,P_2$. Note that any \auto\ of \g mapping $x$ to some other vertex of $C$ must also map $y$ within $C$. It is now easy to see that, as none of $P_1,P_2$ can be a single edge since \g has no \sepe, there is a \auto\ $g$ of \g mapping $x$ to some vertex $x^*$ of $P_1$ and $y$ to some vertex $y^*$ of $P_2$. This, however, easily leads to a contradiction: \eqref{xxyy} implies that $\{x,y\}$ separates $P_1$ from $P_2$ in \G, and so it also separates $x^*$ from $y^*$. But there is a \pth{x^*}{y^*}\ alternating in the colours $b$ and $c$, namely the translate of $P$ under $g$, which contradicts the fact that $\{x,y\}$ only sends $d$-coloured edges to one of the components of $G - \{x,y\}$ by \eqref{xxyy}. This contradiction proves our claim that $x,y$ lie on distinct $bd$ cycles.

So let $C$ be the $bd$ cycle that contains $x$. Since $y \not \in V(C)$, all vertices of $C - x$ lie in the same component of $G - \{x,y\}$. Again, this contradicts \eqref{xxyy}, since $x$ sends both a $b$-coloured and a $d$-coloured edge to that component. Thus we have proved \eqref{bcfin}.

%Now repeat the construction of \Lr{TkiiiA} to obtain an embedding \sig\ of \g in which every vertex has the same spin. 

%We can now apply \Tr{TIIa2}, which implies that \g has two-coloured 4-cycles, labelled $bdbd$ say. Note that as \g has the embedding \sig\ in which every vertex has the same spin, any two opposite vertices of such a cycle $C$ separate \G; see \fig{C4}. This means that $||P||\leq 2$, and as \g has by assumption no \sepe, we have $||P||= 2$. This however contradicts the assumption we made in this case that the colours of $w$ span double-rays: under this assumption, $|w|= ||P||$ must be odd. (\Cg s that locally look like \fig{C4} do exist however; they will be handled in the next case.)

%\showFig{C4}{}

Assume from now on that the length of a $bc$ cycle is $2k$. We now distinguish three subcases depending on the length of $P$.

{\bf Case 1: $||P||= 2$.} In this case we have $2k=4$ since $z$ is an involution. We can now repeat the construction of \Lr{LkiiiA}, with the only difference that instead of $bc$ double rays we now have $bc$ cycles of length 4. Instead of \fig{spiral} we now have the left part of \fig{C4ii}. It is easy to check that the embedding \sig\ we obtain must look like the one in the right part of \fig{C4ii}. Using this embedding it is straightforward to check that \Gam\ can be represented by $\Gam \isom \left< b,c,d\mid b^2, c^2, d^2, bcbc, (bcd)^m\right>$ with $m\geq 2$ (for $m=1$ we obtain a degenerate case: a group with 4 elements whose \Cg\ is 3-connected).

\epsfxsize=.8\hsize
\showFig{C4ii}{The first step $\sig_0$ in the construction of the embedding for the graphs of Case 1 (left) and a possible later step (right). In this example we chose $m=4$, i.e.\ our graph is $Cay \left< b,c,d| b^2, c^2, d^2, bcbc, (bcd)^4\right>$.}

\medskip
{\bf Case 2: $||P||= 3$.} Easily, $2k\geq 6$ in this case. We are again going to repeat the construction of \Lr{LkiiiA}, with the $bc$ cycles playing the role of the $bc$ double-rays there, to construct a \prem\ \sig\ of \G. However, this time only one of the colours, $c$ say, will preserve spin, while all $b$ and $d$ edges will reverse spin. The first step $\sig_0$ of this construction is shown in \fig{nospiral2} (left). It is not hard to prove that if $k$ is odd then \g cannot be planar. For example, if $k=3$ then $G_0$ becomes isomorphic to the Kuratowski graph $K_{3,3}$ after suppressing the vertices of degree two.

\epsfxsize=\hsize
\showFig{nospiral2}{The first step $\sig_0$ in the construction of the embedding for the graphs of Case 2 (left) and a possible later step (right). }

Note that, as desired, the $c$ edges preserve spin in this embedding, while the $b$ edges do not. Our construction  makes sure that the $b$ and $c$ edges retain this property also in the final embedding \sig. We need to make sure that all $d$ edges  reverse spin in \sig. 

Recall that in each step $i$ of our construction we decontract the cycle $D_i$, which was hidden in some vertex $x_i$ of $G_{i-1}$ that was the middle vertex of a path $P_i= y_i e x_i f z_i$ of length two, where $e$ and $f$ are labelled $d$. We will make sure by induction that the endpoints of every maximal $d$-labelled path $P$, no matter whether $P$ is a path of length two with a contracted vertex in the middle or an original edge of \G, have different spins in $\sig_i$. Note that this is the case in the embedding $\sig_0$. Now assuming that it is also true for a later embedding $\sig_{i-1}$, we can easily make sure that it remains true in $\sig_i$: for if it happens to be false, then this can only be so locally at the place where we introduced $D_i$, and we can flip the newly pasted disc $X'$ around ---as we did in the proof of \Lr{TkiiiA}--- to reverse spins to the desired state. Indeed, there are only two vertices $v,w$ (of $D_i$) in $X'$ adjacent to some vertex outside $X'$, and these two vertices have different spins because the embedding within $X'$ is by construction similar to the one of left part of \fig{nospiral2}; now as the neighbours of $v,w$ outside $X'$ have different spins by our induction hypothesis, one of the two possible ways to paste $X'$ into $\sig_{i-1}$ (i.e.\ with or without a flip) preserves our induction hypothesis that endpoints of a maximal $d$-labelled path have different spins. Thus, in the limit embedding \sig, all $d$ edges reverse spin. 

We will now use \Lr{relcc} to obtain a presentation of $\Gam$. To begin with, note that by the construction of \G, each of the faces containing a $cbc$ subpath in their boundary is induced by a relation of the form $(cbcd)^m$, where $m$ is fixed for all such faces. Let $B'$ be the set of circuits of such face boundaries, in other words, the set of cycles induced by $(cbcd)^m$. Moreover, let $B''$ be the set of circuits of the $bc$ cycles of \G. We claim that $B:= B' \cup B''$ generates $\cc_f(G)$. 

To see this, let $D$ be any cycle of $G$. Note that $D$ can be written as a sum (in $\cc_f(G)$) $D= \sum_i D_i$ of cycles $D_i$ such that no $D_i$ meets both sides of some $bc$ cycle: indeed, if $D$ meets both sides of the $bc$ cycle $C$, then we can split $D$ into two subpaths $P_1,P_2$ with endvertices on $C$, and combine each $P_i$ with a subarc $C'$ of $C$ with the same endvertices to obtain two cycles $D_1:= P_1\cup C', D_2:= P_2\cup C'$ whose sum is $D$, so that $P_1,P_2$ together cross $C$ less often than $D$ does. If some of the $D_1$ still crosses $C$, we can repeat this operation on $D_1$ and so on. As $D$ can meet at most finitely many $bc$ cycles, performing this kind of operation a finite number of times we obtain the desired presentation for $D$. It is now not hard to see that each $D_i$ can be written as a sum of elements of $B$; see \fig{nospiral2}. Thus $D= \sum_i D_i$ too can be written as a sum of elements of $B$. This proves our claim that $B$ generates $\cc_f(G)$. Applying \Lr{relcc} we now immediately obtain a presentation: $\Gam\isom \left<b,c,d\mid b^2, c^2, d^2, (bc)^{2k}, (cbcd)^m\right>$ with $k\geq 2$ and $m\geq 2$. (Again, for $m=1$ we have only one $bc$ cycle and \g is \tcon.)

%The embedding \sig\ ---see \fig{nospiral2} (right)--- allows us to apply \Tr{TIId1} to obtain a presentation of \Gam, namely $\Gam\isom \left<b,c,d\mid b^2, c^2, d^2, (bc)^{2k}, (bcbd)^m\right>$ with $k\geq 2$ and $m\geq 2$. 

Note that in the embedding we constructed, every vertex is incident with two finite and one infinite face boundary. In \cite{cayley3} we use these \Cg s as a building block in order to obtain \tcon\ plane \Cg s which have no finite face boundaries, contrary to a conjecture of Bonnington and Watkins \cite{BoWaPla} that none exist. We now state some properties of the graphs we just constructed to be used in \cite{cayley3}.

\begin{proposition}
Let $\g = Cay \left<b,c,d\mid b^2, c^2, d^2, (bc)^{2k}, (cbcd)^m\right>$. Then the following assertions are true.
\begin{enumerate} 
\item \label{nosi} \G\ has a \prem\ $\sig$ in which $c$ preserves spin while $b, d$ reverse spin. In this embedding, each vertex is incident with two faces bounded by a cycle induced by the relator $(cbcd)^m$ and one face that has infinite boundary.
\item \label{nosii} \G\ cannot be separated by removing two edges $e,f$ unless both $e,f$ are coloured $d$, and it cannot be separated by removing a vertex and an edge $e$ unless $e$ is coloured $d$; 
\item \label{nosiii} If a pair of vertices $s,t$ of a cycle $C$ of \G\ induced by 
$(cbcd)^m$ separates \G, then both $s,t$ are incident with a $d$-edge of $C$;
\item \label{nosiv} \G\ has no \sepe;
\item \label{nosvi}	\fe\  cycle $C$ of \g induced by the word $(cbcd)^m$, and every $b$ edge $vw$ of $C$, \ti\  a \pth{v}{w}\ in $G$ meeting $C$ only at $v,w$, and
\item \label{nosv} If two cycles $C,D$ of \G\ induced by the word $(cbcd)^m$ share an edge $uv$, then \ti\ path from $C$ to $D$ in $\G - \{u,v\}$.
\end{enumerate}
\end{proposition}
\begin{proof}
The \prem\ $\sig$ required by \ref{nosi} was constructed above. 

To prove that \ref{nosii} is true, it suffices to show that if $uv$ is an edge coloured $b$ or $c$ then there are two independent \pths{u}{v}\ none of which is $uv$ itself. This is indeed true, and is easy to check using \fig{nospiral2}.

To see that \ref{nosiii} is true, suppose that $s$ is not incident with  a $d$-edge of $C$, in which case it must be incident with  a $b$-edge of $C$. Consider the two neighbours $u=sc$ anf $v=sb$ of $s$ on $C$, and note that if $\{s,t\}$ separates \G, then $u,v$ lie in distinct component because of the cycle $C$. However, it is easy to find a \pth{u}{v}\ in \g that does not meet $C$ except at $u$ and $v$ (use the $bc$ cycle containing $uv$). This easily yields a contradiction.

It is easy to check \ref{nosiv} and \ref{nosvi} using \fig{nospiral2}.

Finally, note that if two cycles $C,D$ of \G\ induced by the word $(cbcd)^m$ share an edge $uv$, then the colour of  $uv$ is $c$, and the $bc$ cycle containing $uv$ has a subpath joining $C$ to $D$ without using any of $u,v$ as claimed by \ref{nosv}.
\end{proof}

{\bf Case 3: $||P||> 3$.} Easily, $2k\geq 8$ in this case. It is now easy to check that \g cannot be planar: using one of the $b,c$ cycles $C$ and three translates of $P$ with endpoints on $C$ one can force $K_{3,3}$, one of the Kuratowski graphs, as a topological minor of \G.% (\fig{k33}).

\medskip
To prove the converse implication, that every presentation as in \Tr{TkiiiB} yields a planar Cayley graph without \sepe s, we can use the constructions of \figs{C4ii} and \ref{nospiral2} to explicitly construct planar \Cg s with embeddings as in the assertion. The fact that these Cayley graphs do indeed have the desired presentation follows from the forward implication, which we have already proved.
\end{proof}

We point out the following observation, which is a consequence of our analy\-sis, to be used in \cite{cayley3}.
\begin{observation}
No graph of the types \ref{iv} or \ref{v} of \Tr{Tmain} has an embedding in which some 2-coloured cycle bounds a face.  
\end{observation}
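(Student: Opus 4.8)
\emph{Proof sketch.} Fix a graph $G$ of type~\ref{iv} or~\ref{v}. In either case $G$ has no \sepe, and by the analysis of \Sr{sec3} its $2$-coloured cycles are precisely the $bc$-cycles; moreover all of these cycles form a single orbit under the action of $\Gam(G)$ by \auto s. I argue by contradiction: suppose some $bc$-cycle $\gamma$ bounds a face $F$ of some embedding $\sig$ of $G$, and aim for a Jordan-arc contradiction.

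First I would transport to $\gamma$ the separator used in the proofs of \Lr{Lk2} and \Tr{TkiiiB}. There one produces a shortest path $P$ whose endvertices $x_0,y_0$ separate $G$, lie on a common $bc$-cycle, and --- by \eqref{xxyy} --- have the property that the two $d$-edges at $x_0,y_0$ run into one component of $G-\{x_0,y_0\}$ while their remaining edges, four in number by \eqref{foure2}, run into the other component. Applying a \auto\ mapping that $bc$-cycle onto $\gamma$, I obtain a pair $\{x,y\}\subseteq V(\gamma)$ with exactly these properties. Let $A$ be the component of $G-\{x,y\}$ receiving those four $b$- and $c$-edges and $B$ the other component; then $B$ is disjoint from $V(\gamma)$ and is joined to $\{x,y\}$ by precisely two edges, the $d$-edges $xx'$ and $yy'$. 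Observe that $x$ and $y$ are non-adjacent in $G$: an edge joining them would, since $\{x,y\}$ separates $G$, be a \sepe.

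Next I would read off the local combinatorics. As $x,y$ are non-adjacent, $\gamma-\{x,y\}$ is the disjoint union of two arcs $Q_1,Q_2$, each with at least one interior vertex, and both of these arcs lie in $A$ (their end-edges at $x$ and $y$ are among the four $b,c$-edges going to $A$, and one then runs along each arc, every internal vertex being a neighbour of the previous one and distinct from $x,y$). Since $A$ is connected there is a path in $G-\{x,y\}$ from $V(Q_1)$ to $V(Q_2)$; taking one that meets $\gamma$ in as few vertices as possible and trimming it between its last vertex in $Q_1$ and the first subsequent vertex in $Q_2$ yields a path $R$ from an interior vertex $u_1$ of $Q_1$ to an interior vertex $u_2$ of $Q_2$ that avoids $x,y$ and whose interior is disjoint from $\gamma$. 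Since $R$ lies in $A$, it is disjoint --- as a subgraph, hence also as a point set under the embedding $\sig$ --- from $B$, $xx'$ and $yy'$.

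Finally, the topological step. As $\sig(\gamma)$ is a simple closed curve bounding the face $F$ on one side, $\sig$ maps all of $G$ into the closed disc $D$ cut off on the other side, with $\partial D=\sig(\gamma)$ and every vertex and edge not on $\gamma$ mapped into the open disc. Thus $X_1:=\sig(B)\cup\sig(xx')\cup\sig(yy')$ and $X_2:=\sig(R)$ are connected subsets of $D$ with $X_1\cap\partial D=\{\sig(x),\sig(y)\}$, $X_2\cap\partial D=\{\sig(u_1),\sig(u_2)\}$ and $X_1\cap X_2=\emptyset$; but on the boundary circle $\partial D=\sig(\gamma)$ the four vertices occur in the cyclic order $x,u_1,y,u_2$, so the two pairs are linked. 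Each $X_i$ is a continuous image of a connected graph, hence arc-connected, so by \LemArcC\ it contains an arc joining its two points on $\partial D$; the arc inside $X_1$ separates $D$ into two discs with $\sig(u_1)$ and $\sig(u_2)$ in different ones, so the arc inside $X_2$ must cross it --- contradicting $X_1\cap X_2=\emptyset$. I expect the most delicate point to be the first step: checking that the edge-distribution property \eqref{xxyy} really transfers to an arbitrary $bc$-cycle, and in particular that the two arcs $Q_1,Q_2$ each contain an interior vertex, which is precisely where the absence of \sepe s is used; granted this ``two linked bridges of $\gamma$'' picture, the impossibility of $\gamma$ bounding a face is routine.
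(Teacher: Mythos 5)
Your argument is correct in substance, and it supplies what the paper leaves implicit: the paper states this observation without proof, as ``a consequence of our analysis'', and your ``two interleaved bridges'' argument (the bridge through $B$ attached at $x,y$ via the two $d$-edges, and a second bridge $R$ attached at vertices $u_1,u_2$ interleaved with $x,y$ on $\gamma$) is exactly the natural way to make that reasoning explicit, and it has the virtue of being independent of any particular embedding. Transporting the separating pair of \Lr{Lk2} to an arbitrary $bc$-cycle by a \auto\ is legitimate, the non-adjacency of $x,y$ (an edge $xy$ would be a hinge), the trimming of $R$, and the final Jordan-curve contradiction are all fine.

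Two steps, however, are asserted where a short justification is needed. First, you claim that in types \ref{iv} and \ref{v} the only 2-coloured cycles are the $bc$-cycles; this is true but nowhere stated in the paper. It follows by re-running the argument for \eqref{bcfin} in the proof of \Tr{TkiiiB}: the contradiction obtained there uses only the existence of a $bd$- (or $cd$-) cycle together with \eqref{xxyy} and the absence of hinges, not the hypothesis that $b,c$ span double rays, so $bd$ and $cd$ have infinite order in these groups. Without this remark your proof would not cover a hypothetical 2-coloured cycle in the colours $b,d$ or $c,d$. Second, the inference ``$\gamma$ bounds a face $F$, hence all of $G$ is mapped into the closure of the other side of $\sigma(\gamma)$'' is not immediate from the paper's definition of the boundary of a face (the vertices and edges mapped into the closure of $F$), since for infinite graphs an edge may meet the closure of a face without being contained in it. It is nevertheless correct here: if some bridge of $\gamma$ were drawn on the side containing $F$, then a path in that bridge between two of its (at least two, by 2-connectedness) attachment vertices would split that side into two regions; $F$ lies in one of them, and the edges of $\gamma$ on the arc of $\gamma$ bounding the other region cannot then lie in the closure of $F$, contradicting the assumption that the boundary of $F$ is all of $\gamma$ --- a miniature of the same Jordan-curve argument you use at the end. (A cosmetic point: if $F$ is the bounded side, the closed complementary region is a disc only after passing to the sphere, but this does not affect the argument.) With these two additions the proof is complete.
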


\section{Summary and final remarks}
%\medskip

Combining our analysis of Sections \ref{sec2} and \ref{sec3} immediately yields our main results:

\begin{proof}[Proof of \Tr{Tmain}]
Any cubic \Cg\ \g has either two or three generators. In the former case Theorems \ref{Tki} and \ref{Tkiv} imply that \g is of type \ref{i}, \ref{ii} or \ref{iii}. In the latter case, Theorems \ref{Tkii} \ref{TkiiiA} and \ref{TkiiiA} imply that \g is of one of the types \ref{iv} to \ref{ix}.

We need to check that \g cannot belong to more than one of these types. For most of the pairs of types this is obvious; we now consider the remaining pairs. The fact that \g cannot be of both type \ref{i} and \ref{ii} follows from assertion \ref{noaba} of the proof of \Tr{Tki}. The graphs of the last two types have no 2-coloured cycles, thus they cannot also belong to some of the types  \ref{iv} to \ref{vi}. \Tr{TkiiiB} shows that no graph of both types \ref{iv} and \ref{v} can exist.  

The converse implication is again obtained by combining the corresponding results in Sections \ref{sec2} and \ref{sec3}.
\end{proof}

Similarly, \Cr{cornp} follows easily from propositions \ref{Tki}, \ref{Tkii}, \ref{Lk2} and \ref{LkiiiA}.
\medskip

Our analysis of Sections \ref{sec2} and \ref{sec3} also implies 
\begin{corollary} \label{cprem}
Every cubic planar \Cg\ of connectivity 2 admits a \prem.
\end{corollary}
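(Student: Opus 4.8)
The plan is to obtain the corollary as a direct consequence of \Tr{Tmain} together with the embeddings constructed along the way in Sections~\ref{sec2} and~\ref{sec3}. By \Tr{Tmain}, \g is of exactly one of the types~\ref{i}--\ref{ix}, so it suffices to produce a \prem\ for each type, and for types~\ref{i}--\ref{viii} this has in effect already been done.

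Concretely, for the two-generator graphs I would invoke \Tr{Tki} (types~\ref{i} and~\ref{ii}), which asserts a \prem\ in which $a$ preserves spin, and \Tr{Tkiv} (type~\ref{iii}), which asserts a \prem\ in which $a$ reverses and $b$ preserves spin. For the three-generator graphs with a \sepe, namely types~\ref{vi} and~\ref{viii}, \Tr{Tkii} produces an embedding in which every edge reverses spin, respectively one in which the colour of the \sepe s preserves spin and the other two colours reverse it; both of these are \pr, because an embedding of a cubic \Cg\ is \pr\ precisely when, for each colour, all edges of that colour have the same spin behaviour. For type~\ref{vii}, \Lr{LkiiiA} constructs an embedding in which all edges preserve spin --- hence \pr\ --- and \Cr{TkiiiA} identifies the graphs in question. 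Finally, for types~\ref{iv} and~\ref{v}, \Tr{TkiiiB} supplies a \prem\ directly: one with a single spin in the first case, and one in which only $c$ preserves spin in the second.

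The only family not literally covered by the above is the degenerate type~\ref{ix}, where $cd$ is a relator and \g is finite; here I would add a short direct argument. Since $c$ and $d$ then represent the same involution, \g is the $2n$-cycle $Cay\langle b,c\mid b^2,c^2,(bc)^n\rangle$ in which every $c$-edge is doubled by a parallel $d$-edge. Drawing the $2n$-cycle as a convex polygon and each additional $d$-edge as a short arc immediately outside the polygon, one obtains a planar embedding whose faces are the interior $2n$-gon (bounded by all the $b$- and $c$-edges), the $n$ thin digons (each bounded by a $c$-edge and its parallel $d$-edge), and the outer face (bounded by all the $b$- and $d$-edges). Any \auto\ of \g preserves colours, hence maps the set of $b$- and $c$-edges, and the set of $b$- and $d$-edges, each onto itself; as these are precisely the boundaries of the interior and outer faces, every \auto\ fixes those two faces and merely permutes the $n$ digons among themselves. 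Thus every \auto\ carries facial walks to facial walks, and by the criterion recorded in \Sr{secDem} the embedding is \pr.

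I do not anticipate a genuine obstacle: the statement is essentially a matter of collecting results already established. The two points that need a word of care are the (immediate) observation that the ``one spin behaviour per colour'' embeddings produced by \Tr{Tkii} and \Lr{LkiiiA} are \pr\ in the sense of \Sr{secDem}, and the explicit verification for the finite degenerate graphs of type~\ref{ix} just outlined.
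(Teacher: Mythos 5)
Your proposal is correct and takes essentially the same route as the paper, whose proof of Corollary~\ref{cprem} is simply the observation that the embeddings constructed in Sections~\ref{sec2} and~\ref{sec3} (Theorems~\ref{Tki}, \ref{Tkiv}, \ref{Tkii}, \ref{TkiiiB} and Lemma~\ref{LkiiiA}) are all \pr, exactly as you collect them type by type. Your explicit verification for the degenerate finite type~\ref{ix}, which the paper leaves implicit, is a correct and harmless addition.
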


An important property of planar a Cayley graph  is whether it admits a \vapf\ embedding (as defined in \Sr{secDem}), since this can have important implications for its group-theoretical \cite{vapf} as well as its graph-theoretical properties \cite{CWY, kozPPP}. For the graphs studied in this paper we can always decide whether they have this property:

\begin{corollary} \label{cvap}
A cubic planar \Cg\ of connectivity 2 admits a \vapf\ embedding \iff\ it does not belong to one of the types \ref{iii}, \ref{iv}, \ref{v}, and \ref{vii} of \Tr{Tmain}.
\end{corollary}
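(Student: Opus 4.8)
The plan is to treat the two implications by going through the nine types of \Tr{Tmain}.

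\textbf{Types~\ref{i}, \ref{ii}, \ref{vi}, \ref{viii}, \ref{ix} admit a \vapf\ embedding.} For the finite type~\ref{ix} this is trivial. For types~\ref{i} and~\ref{ii} the embeddings constructed in the proof of \Tr{Tki} — a $3$-regular tree with each vertex blown up to a finite $a$-$b$ cycle, consecutive cycles glued along a $b$-edge — are tree-like, so their vertices have no accumulation point and the embeddings are \vapf. For types~\ref{vi} and~\ref{viii} the embeddings constructed in the proof of \Tr{Tkii} were obtained through \Cr{Macay}, whose very conclusion is a \vapf\ embedding; so nothing more is needed.

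\textbf{Types~\ref{iii}, \ref{iv}, \ref{v}, \ref{vii} admit no \vapf\ embedding.} The tool is the following topological observation, to be applied four times. \emph{Let $G$ be a connected \lfg\ containing pairwise disjoint cycles $(C_i)_{i\in\Z}$ such that for each $i$ the graph $G-V(C_i)$ has exactly two components $A_i\supseteq V(C_{i-1})$, $B_i\supseteq V(C_{i+1})$, and $C_i$ sends at least two edges to each of $A_i,B_i$, the four endvertices appearing around $C_i$ in the pattern $A_i,B_i,A_i,B_i$; then $G$ has no \vapf\ embedding.} For the proof, take any embedding $\sig$ of $G$ and regard it in $S^2$. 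The pattern $A_i,B_i,A_i,B_i$ forces $A_i$ and $B_i$ into different faces of the circle $\sig(C_i)$ — inside a disc bounded by $\sig(C_i)$, a connected set meeting two cyclically separated boundary vertices separates the two vertices between them — so, letting $D_i$ be the closed disc bounded by $\sig(C_i)$ on the $B_i$-side, we get $D_i\supsetneq D_{i+1}$, hence $D_i\supseteq\sig(C_j)$ for all $j>i$ while the complementary disc $E_i$ satisfies $E_i\supseteq\sig(C_j)$ for all $j<i$. Since $\infty$ lies in the interior of exactly one of $D_0,E_0$, the other is a compact subset of $\R^2$ meeting $\sig(C_j)$ for infinitely many $j$; choosing one vertex on each of these (pairwise disjoint) cycles gives infinitely many distinct vertices in a fixed compact set, contradicting \vapf ness.

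\textbf{Producing the cycles $(C_i)$} in each of the four types is where the structure theory of Sections~\ref{sec2} and~\ref{sec3} enters. In type~\ref{iii} the $a$-cycles have length $4$ by \eqref{ceqf}; ordered radially as in \fig{target} they are pairwise disjoint, and by the analysis in the proof of \Tr{Tkiv} (every $a$-edge reverses spin; each $a$-cycle sends $b$-edges into both components cut off by the separating pair $\{x,y\}$) consecutive ones are joined — through their $b$-edges and the translates of the shortest separating path $P$ of \Lr{Lk} — precisely in the alternating pattern above. In types~\ref{iv} and~\ref{v} one uses the $2$-coloured cycles, joined to their neighbours via the connecting third-colour edges and the translates of the path $P$ of \Lr{Lk2}; the alternation follows from the proof of \Tr{TkiiiB} and, for type~\ref{v}, from the auxiliary Proposition stated there. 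In type~\ref{vii} the cycles induced by $(b(cb)^nd)^m$, taken as the successive turns of the spiral of \fig{spiral}, play the role of the $C_i$, the interleaving of the connecting $d$-edges being immediate from the construction in the proof of \Lr{LkiiiA}.

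\textbf{The main obstacle} is precisely this last step: certifying, type by type, the separation and — above all — the alternation hypotheses of the topological observation from the local picture around the chosen cycles. The verification is essentially bookkeeping with the presentations and the figures, but it must be carried out separately in each of the four cases, and a little extra care is needed in type~\ref{iii} when $n$ is odd (there the graph is less ``ladder-like'', so it is cleaner to let each $C_i$ run once around an entire radial layer rather than be a single $a$-cycle) and in type~\ref{vii} (where the $C_i$ are large cycles rather than short ones).
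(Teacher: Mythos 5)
The paper offers no proof of \Cr{cvap} (it is explicitly left to the reader), so your argument has to stand on its own. Its positive half is fine: type \ref{ix} is finite, types \ref{vi} and \ref{viii} get \vapf\ embeddings straight from \Cr{Macay}, and for types \ref{i} and \ref{ii} the tree-of-cycles construction (or again \Cr{Macay}) can indeed be realised without vertex accumulation. Your nested-discs observation is also essentially correct as a piece of topology, and it does dispose of types \ref{iii} and \ref{iv}: there each spanning 4-cycle (the $a$-cycle, resp.\ the $bc$-cycle) really does leave exactly two components when its vertex set is deleted, with the two attachment pairs being the two diagonals, so a suitable bi-infinite sequence of such cycles exists even though for larger parameters these graphs are trees of ``necklaces'' rather than the linear chain your ``radial order'' wording suggests (the complication arises for all $n\geq 3$, not for odd $n$).

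The genuine gap is in the applications to types \ref{v} and \ref{vii}, exactly the step you defer as ``bookkeeping''. First, the hypothesis ``$G-V(C_i)$ has exactly two components'' is false for the cycles you designate: deleting a $bc$-cycle in type \ref{v} leaves $2n\geq 4$ components (one per attached $(cbcd)^m$-cycle), and deleting a cycle induced by $(b(cb)^nd)^m$ in type \ref{vii} leaves $2m\geq 4$ components, so the lemma as stated cannot be invoked; it must be weakened to require only that the two components containing $C_{i-1}$ and $C_{i+1}$ be distinct. Second, the alternation is not automatic and is not supplied by the sources you cite. In type \ref{vii}, two relator cycles meeting the same $bc$-double-ray at overlapping cosets of $\left<b(cb)^n\right>$ share $2n$ vertices, so ``successive turns of the spiral'' are not even pairwise disjoint; and if instead you take consecutive $C_i$ along a path of the amalgamation tree through \emph{different} double rays, then all edges from $C_i$ to the component of $C_{i-1}$ lie on one subarc of $C_i$ and all edges to the component of $C_{i+1}$ on a disjoint subarc, so the pattern is $A,A,B,B$ rather than $A,B,A,B$ and the nesting argument collapses. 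The statement is still provable along your lines, but only with a specific choice: in type \ref{vii} take all $C_i$ to be relator cycles attached to one fixed $bc$-double-ray $D$ at pairwise disjoint subpaths, and obtain the alternation from the $d$-edges at the interior vertices of $C_i\cap D$, whose partner vertices lie alternately on the two sides of that subpath (here $n\geq 2$ is used); in type \ref{v} consecutive $C_i$ must be $bc$-cycles whose connecting $\left<cbc\right>$-cosets are \emph{adjacent} (offset by two along the cycle), since non-adjacent pairs $\{x,x\cdot cbc\}$ do not interleave. (Alternatively, for type \ref{vii} one can argue directly that in a \vapf\ embedding a $bc$-double-ray together with $\infty$ is a Jordan curve, and the chords joining $x$ to $x\cdot b(cb)^n$ are pairwise interleaving for $n\geq2$, which is impossible with only two sides.) As it stands, these verifications are missing and the cited Proposition and figures do not provide them.
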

The interested reader will be able to prove \Cr{cvap} using our analysis of these graphs. 

It is proved in \cite{vapf} that a \Cg\ admits a \vapf\ embedding \iff\ it is the 1-skeleton of a Cayley complex that can be embedded in the plane after removing some redundant simplices. Thus, by the above corollary, some of our graphs are not 1-skeletons of any such Cayley complex. However, they are 1-skeletons of an \defi{almost planar} Cayley complex, that is, a Cayley complex that can be mapped to $\R^2$ in such a way that the images of the interiors of any two 2-simplices are either disjoint or one of them is contained in the other, or their intersection is a 2-simplex bounded by the two parallel edges corresponding to some involution in the generating set. 

\begin{corollary} \label{cplem}
Every cubic planar \Cg\ of connectivity 2 is the 1-skeleton of an almost planar Cayley complex.
\end{corollary}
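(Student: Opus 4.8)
The plan is to derive \Cr{cplem} directly from the classification in \Tr{Tmain} together with the explicit embeddings constructed in the proofs of Sections~\ref{sec2} and \ref{sec3}. For each of the types \ref{i}--\ref{ix}, the relevant theorem (\Tr{Tki}, \Tr{Tkiv}, \Tr{Tkii}, \Cr{TkiiiA}, \Tr{TkiiiB}) supplies a \prem\ $\sig$ of \G\ in which the colours are split into spin-preserving and spin-reversing ones. The idea is that such a \prem\ yields a candidate Cayley complex: take the 2-cells to be the discs bounded by the cycles of \g induced by the relators of the presentation, glued along \g according to $\sig$. The point is that \Tr{Tmain} tells us that in every case the relator set can be chosen so that no edge lies in more than two relator-induced circuits \emph{except} for the edges corresponding to an involution $s$ that appears in a relator of the form $s^2$ only as a ``doubled'' edge, and these are exactly the edges along which almost planarity permits a violation.

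First I would recall that by \Cr{Macay} (equivalently Thomassen's extension of MacLane's criterion) a graph whose cycle space has a $2$-basis consisting of the relator circuits admits a \vapf\ embedding whose facial cycles are those circuits; in that situation the Cayley complex built from the relators is genuinely planar, hence trivially almost planar. By \Cr{cvap}, this handles all types except \ref{iii}, \ref{iv}, \ref{v} and \ref{vii}. So the work is to treat those four remaining families. For each of them the proof of the corresponding theorem already produced an explicit embedding $\sig$ (\fig{target} for \ref{iii}; the spiral-type construction of \Lr{LkiiiA} and its variants in \figs{C4ii} and \ref{nospiral2} for \ref{iv}, \ref{v}, \ref{vii}) together with the relator presentation whose induced cycles are the finite face boundaries of $\sig$. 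In each case one checks that the only edges lying in more than two relator-induced circuits are those bearing the involution colour that occurs as a ``short'' relator $s^2$ and nowhere else in a longer relator --- e.g.\ the colour $a$ in type \ref{iii} (where $a$ is not an involution, so one reinterprets: here it is the parallel edges of the involution drawn singly that create the doubling), the colour $d$ in types \ref{v} and \ref{vii}, and likewise for \ref{iv}. Attach to \g, for each relator $R$, one $2$-simplex (a $|R|$-gon) for every closed walk induced by $R$, plus the two parallel-edge bigons for each involution. This is by construction a Cayley complex with $1$-skeleton \G.

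The heart of the argument is then to produce the required map of this Cayley complex to $\R^2$. I would take the embedding $\sig$ of \g as the starting point and extend it cell by cell: each finite face of $\sig$ is bounded by exactly one relator circuit, so its $2$-simplex maps homeomorphically onto the closed face. The $2$-simplices whose boundary circuit bounds an \emph{infinite} face of $\sig$ (these exist precisely in types \ref{iii}--\ref{vii}, where some vertex is incident with an infinite face boundary) are the ones that cannot be embedded; here one uses the definition of almost planarity, mapping such a $2$-simplex onto a region already covered by other $2$-simplices, arranged so that its interior is either disjoint from, or contained in, or meets each other $2$-simplex's interior only in a bigon bounded by parallel involution-edges. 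Concretely, for the spiral constructions of \figs{C4ii} and \ref{nospiral2} the infinite-face $2$-cell can be folded along the $d$-edges (resp.\ the doubled involution edges) so that it is laid over the stack of the bounded $2$-cells on the other side; the combinatorial bookkeeping is exactly the statement that these are the edges in more than two relator circuits. The main obstacle, then, is this last step: verifying, for each of the four exceptional families, that the overlaps of the ``extra'' $2$-simplices with the rest are always of the permitted bigon type and never genuinely transversal. This is a finite check --- four families, each with a concrete picture --- but it is where all the care lies, and it is essentially the observation, already implicit in \cite{vapf} and in \Cr{cvap}, that the failure of VAP-freeness in these graphs is localized at the parallel edges of a single involution.

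I would close by remarking that no new ideas beyond those already deployed for \Cr{cvap} are needed: one simply notes that ``admits a \vapf\ embedding'' is replaced by the weaker ``is the $1$-skeleton of an almost planar Cayley complex'', and the latter is satisfied by \emph{every} type because the obstruction identified in \Cr{cvap} --- an edge in three relator circuits --- is always an edge that can be accommodated by the bigon clause in the definition of almost planarity. Hence the corollary follows from \Tr{Tmain}, the explicit embeddings in its proof, and \Cr{Macay} for the VAP-free cases.
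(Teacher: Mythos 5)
Your top-level plan is the same as the paper's (whose entire justification is the one-line remark that the claim ``can be easily seen by considering the embeddings we constructed and the presentations we chose''): attach the relator $2$-cells to the presentations of \Tr{Tmain} and map the resulting Cayley complex to the plane using the embeddings of Sections~\ref{sec2} and \ref{sec3}, the types admitting \vapf\ embeddings being immediate via \Cr{Macay} and \Cr{cvap}. But the concrete mechanism you offer for the four exceptional types \ref{iii}, \ref{iv}, \ref{v}, \ref{vii} is wrong precisely at the step you yourself identify as ``where all the care lies''. First, your bookkeeping of the offending edges is incorrect: the edges lying in three or more relator-induced circuits are the $a$-edges in type \ref{iii} (where $a$ has order $4$, so no involution bigon is available --- you notice this and leave it unresolved), and the $b$- and $c$-edges in types \ref{iv}, \ref{v} and \ref{vii}, not the $d$-edges. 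Second, the bigon clause in the definition of almost planarity only permits two $2$-simplices to intersect in the $2$-simplex bounded by the \emph{two parallel edges of an involution}; it is there to accommodate the $s^2$-cells, and it cannot absorb the overlap created by ``folding'' a large relator cell over a stack of other cells along $d$-edges --- such a fold produces intersections that are unions of whole faces, not bigons. Third, the cells that cannot be embedded are not the ones ``whose boundary circuit bounds an infinite face'' (infinite face boundaries are not relator circuits at all), but the relator cells whose boundary cycles are \emph{non-facial} in the constructed embeddings: the $a^4$-squares in type \ref{iii}, the two-coloured cycles in types \ref{iv} and \ref{v} (cf.\ the Observation that these never bound faces), and the surplus $(b(cb)^nd)^m$-cycles in type \ref{vii}.

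What actually makes the corollary true --- and what the paper's terse proof implicitly appeals to --- is the nesting/disjointness clause: map each non-facial relator cell onto one of the two closed discs bounded by the image of its boundary cycle under \sig; every facial cell lies inside a single face of \sig, hence is wholly inside or wholly outside any such disc, and it remains to check that distinct non-facial relator cycles never cross in the constructed embeddings (for instance, distinct $a^4$-squares, respectively distinct two-coloured cycles, are vertex-disjoint, being cosets), so that their discs can be taken nested or disjoint; the bigon clause is invoked only for the $s^2$-cells themselves. Your proposal does not carry out this verification, and the verification it does sketch cannot even be formulated for type \ref{iii}; so there is a genuine gap exactly where the content of the statement lies.
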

This fact can be easily seen by considering the embeddings we constructed and the presentations we chose.

\Cr{cplem} is extended  in \cite{cayley3} to all cubic planar \Cg s by more involved arguments.

\acknowledgement{I am grateful to Martin Dunwoody for useful discussions on the topic.}

\bibliographystyle{plain}
\bibliography{../collective}
\end{document}